\newtheorem{theorem}{Theorem}[section]
\newtheorem{remark}[theorem]{Remark}
\newtheorem{assumption}{Assumption}
\newtheorem{definition}{Definition}
\numberwithin{equation}{section}
\begin{document}

\title[an inverse source problem]{An inverse source problem for the stochastic wave equation}

\author{Xiaoli Feng}
\address{School of Mathematics and Statistics, Xidian University, Xi'an, 713200, P. R. China}
\email{xiaolifeng@xidian.edu.cn}

\author{Meixia Zhao}
\address{School of Mathematics and Statistics, Xidian University, Xi'an, 713200, P. R. China}
\email{meixiazhao@stu.xidian.edu.cn}

\author{Peijun Li}
\address{Department of Mathematics, Purdue University, West Lafayette, Indiana 47907, USA}
\email{lipeijun@math.purdue.edu}

\author{Xu Wang}
\address{Department of Mathematics, Purdue University, West Lafayette, Indiana 47907, USA}
\email{wang4191@purdue.edu}

\thanks{The research of XF is supported partially by the CSC fund (No. 201806965033). The research of PL is supported in part by the NSF grant DMS-1912704.}

\subjclass[2010]{35R30, 35R60, 65M32}

\keywords{Stochastic wave equation, inverse source problem, fractional Brownian motion, uniqueness, ill-posedness}

\begin{abstract}
This paper is concerned with an inverse source problem for the stochastic wave equation driven by a fractional Brownian motion. Given the random source, the direct problem is to study the solution of the stochastic wave equation. The inverse problem is to determine the statistical properties of the source from the expectation and covariance of the final-time data. For the direct problem, it is shown to be well-posed with a unique mild solution. For the inverse problem, the uniqueness is proved for a certain class of functions and the instability is characterized. Numerical experiments are presented to illustrate the reconstructions by using a truncation-based regularization method.
\end{abstract}

\maketitle

\section{Introduction}

As one of the representative examples on inverse problems in partial differential equations, the inverse source problem (ISP) has received a lot of attention in mathematical and engineering communities. Especially, the ISP for wave propagation is an active and important research topic in the field of inverse problems due to the significant applications in diverse scientific areas, such as magnetoencephalography \cite{magnetoencephalography}, photoacoustic tomography \cite{photoacoustic}, ultrasonics imaging \cite{ultrasonics}, antenna design and synthesis \cite{Antennas1,Antennas2}.

The ISP is to determine the unknown source from a knowledge about the solution. It is challenging due to the ill-posedness for lack of uniqueness or stability. This paper is concerned with an ISP for the wave equation, which has been 
extensively investigated for the deterministic case. The well-posedness and stability can be found in \cite{BLT-jde10, Cannon+1983, Uniqueness+1981, Liu+Triggiani+2011, Maarten+2016, Uniqueness+Stability+Yamamoto+2003, Uniqueness+Stability+Yamamoto, Uniqueness+Stability+2011} and \cite{Liu+Triggiani+2011, Uniqueness+Stability+Yamamoto+2003, Uniqueness+Stability+Yamamoto, Stability+Imanuvilov+2001, Uniqueness+Stability+2011, Stability+Yamamoto+1995}, respectively. Some of the numerical results may be found in \cite{BaoG+2011, Chen+2020, Hasanov+2016, Nguyen+2019, Sattari+2019} and the references cited therein. We refer to the monograph \cite{theory+Isakov+1990} for a complete account of the general theory on the ISP for the wave equation.

Recently, the field of stochastic inverse problems has been undergoing a rapid development and progressing to an area of intense activity. Stochastic inverse problems refer to inverse problems that involve uncertainties, which play a vital role in mathematical models to handle unpredictability of the environments and incomplete knowledge of the systems and measurements. Compared to deterministic counterparts, stochastic inverse problems have substantially more difficulties 
on top of existing hurdles due to randomness and uncertainties. The inverse random source problems for the time-harmonic wave equations have been widely studied. In \cite{Devaney+1979}, it was shown that the correlation of the random source could be determined uniquely by the correlation of the random wave field. Effective computational models were developed in \cite{Bao+Chen+Li+2016, Bao+Chen+Li+2017, Bao+Chow+Li+Zhou+2014, Li+Chen+Li+2017, LiPJ+2011, Li+Yuan+2017, LW2021} for the time-harmonic stochastic acoustic and elastic wave equations, where the goal was to reconstruct the statistical properties of the random source from the boundary measurement  of the radiated random wave field at multiple frequencies.

In this paper, we consider an ISP for the following initial-boundary value problem of the stochastic wave equation driven by the fractional Brownian motion (fBm):
\begin{equation}\label{directp}
\begin{cases}
u_{tt}(x,t)-\Delta u(x,t)=F(x,t), &\quad (x,t)\in{D}\times(0,T),\\
u(x,t)=0, & \quad (x,t)\in{\partial D}\times[0,T], \\
u(x,0)=u_t(x,0)=0, &\quad x\in\overline{D},
\end{cases}
\end{equation}
where $D\subset\mathbb R^d$ is a bounded domain with Lipschitz boundary $\partial D$ and the random source is assumed to take the form
\begin{equation}\label{rs}
F(x,t)=f(x)h(t)+g(x)\dot{B}^H(t).
\end{equation}
Here $f(x)$ and $g(x)$ are deterministic functions with compact supports contained in $D$, $h(t)$ is also a deterministic function, $B^H(t)$ is the fBm with the Hurst index $H\in(0,1)$, and $\dot{B}^H(t)$ can be roughly understood as the derivative of $B^H(t)$ with respect to the time $t$. When $H=\frac12$, the fBm reduces to the classical Brownian motion and $\dot{B}^H(t)$ becomes the white noise. Since the source $F(x,t)$ is a random field with low regularity, it is a distribution instead of a function. More precisely, it is shown in \cite{LW2019} that $F(\cdot,t)\in W^{H-1-\epsilon}(\mathbb{R})$ for any $\epsilon>0$. Clearly, we can see that $H-1-\epsilon<0$ as $H\in(0,1)$, and the smaller the Hurst index $H$ is, the lower the regularity of the source has. Given $F$, the direct problem is to determine the wave field $u$; the inverse problem is to recover $f$ and $g$ from the final-time data, i.e., $u(x, T), x\in D$.

So far, there have been considerable work done for the direct problems of the stochastic wave equation with different random sources. In \cite{Orsingher+1982}, the author studied randomly forced vibrations of a string driven by three Gaussian white noises. In \cite{Walsh+1984}, a white noise in both time and space was considered, and the existence and uniqueness of the solution were obtained for the stochastic wave equation. In \cite{Dalang+1998}, the authors examined the conditions on the well-posedness for a two-dimensional stochastic wave equation driven by a space-time Gaussian noise that is white in time but has a non-degenerate spatial covariance. In \cite{Caithamer+2005}, the author gave the upper and lower bounds on both the large and small derivations of several sup norms associated with the solution for a fractional Brownian noise. Some existence and uniqueness results can be found in \cite{Tindel+2007} for the one-dimensional stochastic wave equation driven by a two-parameter fBm. In \cite{
Erraoui+2008}, the existence and uniqueness of solutions were investigated for a class of hyperbolic stochastic partial differential equations driven by a space-time additive fractional Brownian sheet. In \cite{Tang+2020}, the authors showed several solutions of a stochastic wave
equation in the plane with an additive noise which is fractional in time and has a non-degenerate spatial covariance. In
\cite{numerical+1d+2017}, the existence of solutions was obtained and Newton's method was applied for nonlinear stochastic wave equations driven by one-dimensional white noise with respect to time. In \cite{Dengnumerical+2020},
a higher order approximation was proposed to solve the stochastic space fractional wave equation forced by an additive space-time Gaussian noise.

Compared with the direct problems for stochastic wave equations driven by fBm, there are few work for the inverse source problems for the stochastic wave equations driven by fBm. Recently, \cite{LW2019} considered an inverse random source problem for the Helmholtz equation driven by a fractional Gaussian field. The approach was further extended to solve the inverse random source problem for time-harmonic Maxwell's equations driven by a centered complex-valued Gaussian vector field with correlated components \cite{LW2020}. In \cite{Niu+2018} and \cite{FLW2020}, the ISP was studied for the stochastic fractional diffusion equation where the source is assumed to take the form of \eqref{rs} for $H=\frac{1}{2}$ and a general $H\in(0, 1)$, respectively. The goal is to reconstruct $f$ and $|g|$ from the final-time data.

In this work, we consider the ISP for the stochastic wave equation where the source is driven by the fBm. It contains three contributions. First, we show that the direct problem admits a well-defined mild solution which satisfies some stability estimate. Second, the uniqueness and instability are discussed for the inverse problem, which is to use the empirical expectation and correlation of the final-time data $u(x,T)$ to reconstruct $f$ and $|g|$ of the source term $F$. In \cite{FLW2020}, the ISP was studied for the time fractional diffusion equation driven by a fBm, where the parameter of the Caputo fractional derivative $\alpha$ is restricted to $0<\alpha\leq1$. Some of the results presented in \cite{FLW2020} are not valid any more for the hyperbolic equation where $\alpha=2$. In particular, the uniqueness of the inverse problem could not be guaranteed. Therefore it is worthwhile to investigate separately the ISP for the stochastic wave equation. Third, the numerical experiments are given to illustrate 
how to obtain $f$ and $|g|$. Since the ISP is ill-posed, the truncation based regularization method is adopted to reconstruct $f$ and $|g|$. The numerical examples show that the method is effective to recover the source functions.

The paper is organized as follows. In Section 2, we introduce some preliminaries for the fBm and the mild solution of the stochastic wave equation. Section 3 is concerned with the well-posedness of the direct problem. Section 4 is devoted to the inverse problem. The uniqueness and instability are discussed. The numerical experiments are presented in Section 5. The paper concludes with some general remarks in Section 6.

\section{Preliminaries}

We begin with a brief introduction to fBms. The details can be found in \cite{Nualart+2006,FLW2020}. Let $(\Omega, \mathcal F, \mathbb{P})$ be a complete probability space, where $\Omega$ is a sample space, $\mathcal F$ is a $\sigma$-algebra on $\Omega$, and $\mathbb{P}$ is a probability measure on the measurable space $(\Omega, \mathcal F)$. For a random variable $X$, denote by $\mathbb{E}(X)$ and $\mathbb{V}(X)=\mathbb{E}(X-\mathbb{E}(X))^2 = \mathbb{E}(X^2)-(\mathbb{E}(X))^2$ the expectation and variance of $X$, respectively. For two random variables $X$ and $Y$, $\text{Cov}(X,Y)=\mathbb{E}[(X-\mathbb{E}(X))(Y-\mathbb{E}(Y))]$ stands for the covariance of $X$ and $Y$. In the sequel, the dependence of random variables on the sample $\omega\in\Omega$ will be omitted unless it is necessary to avoid confusion.

The one-dimensional fBm $B^H(t)$ is a centered Gaussian process, which satisfies $B^H(0)=0$ and is determined by the covariance function
\begin{equation*}
R(t,s)=\mathbb{E}[B^H(t)B^H(s)]=\frac{1}{2}\left(t^{2H}+s^{2H}-|t-s|^{2H}\right)
\end{equation*}
for any $s,t\ge0$. In particular, if $H=\frac12$, $B^H$ turns to be the standard Brownian motion, which is usually denoted by $W$ and has the covariance function $R(t,s)=t\wedge s$.

The increments of fBms satisfy
\begin{equation*}
\mathbb{E}\left[\left(B^H(t)-B^H(s)\right)\left(B^H(s)-B^H(r)\right)\right]=\frac{1}{2}\left[(t-r)^{2H}-(t-s)^{2H}-(s-r)^{2H}\right]
\end{equation*}
and
\begin{equation*}
\mathbb{E}\left[\left(B^H(t)-B^H(s)\right)^2\right]=(t-s)^{2H}
\end{equation*}
for any $0<r<s<t$. It indicates that the increments of $B^H(t)$ in disjoint intervals are linearly dependent except for the case $H=\frac{1}{2}$, and the increments are stationary since their moments depend only on the length of the interval.

Based on the moment estimates and the Kolmogorov continuity criterion, it holds
for any $\epsilon>0$ and $s,t\in[0,T]$ that
\[
|B^H(t)-B^H(s)|\le C|t-s|^{H-\epsilon}
\]
almost surely with constant $C$ depending on $\epsilon$ and $T$. It is clear to note that $H$ represents the regularity of $B^H$ and the trajectories of $B^H$ are $(H-\epsilon)$-H\"older continuous.

The fBm $B^H$ with $H\in(0,1)$ has a Wiener integral representation
\[
B^H(t)=\int_0^tK_H(t,s)dW(s),
\]
where $K_H$ is a square integrable kernel and $W$ is the standard Brownian motion.

For a fixed interval $[0,T]$, denote by $\mathcal{E}$ the space of step functions on $[0,T]$ and by $\mathcal{H}$ the closure of $\mathcal{E}$ with respect to the product
\[
 \langle\chi_{[0, t]}, \chi_{[0, s]}\rangle_{\mathcal H}=R(t, s),
\]
where $\chi_{[0, t]}$ and $\chi_{[0, s]}$ are the characteristic functions. For $\psi(t),\phi(t)\in\mathcal{H}$, it follows from the It\^{o} isometry that

(1) if $H\in(0,\frac12)$,
\begin{eqnarray}\label{E012}
&&\mathbb{E}\left[\int_0^t\psi(s)dB^H(s)\int_0^t\phi(s)dB^H(s)\right]\nonumber\\
&=&\int_0^t\bigg\{c_H\Big[\big(\frac{t}{s}\big)^{H-\frac12}(t-s)^{H-\frac12}-(H-\frac12)s^{\frac12-H}\int_s^tu^{H-\frac32}(u-s)^{H-\frac12}du\Big]\psi(s)\nonumber\\
&& +\int_s^t(\psi(u)-\psi(s))c_H\big(\frac{u}{s}\big)^{H-\frac12}(u-s)^{H-\frac32}du\bigg\}\nonumber\\
&&\times \bigg\{c_H\left[\big(\frac{t}{s}\big)^{H-\frac12}(t-s)^{H-\frac12}-(H-\frac12)s^{\frac12-H}\int_s^tu^{H-\frac32}(u-s)^{H-\frac12}du\right]\phi(s)\nonumber\\
&& +\int_s^t(\phi(u)-\phi(s))c_H\big(\frac{u}{s}\big)^{H-\frac12}(u-s)^{H-\frac32}du\bigg\}ds,
\end{eqnarray}
where $c_H=\left(\frac{2H}{(1-2H)\beta(1-2H,H+\frac12)}\right)^{\frac12}$ and $\beta(p,q)=\int_0^1t^{p-1}(1-t)^{q-1}dt$;

(2) if $H=\frac12$,
\begin{align}\label{E12}
\mathbb{E}\left[\int_0^t\psi(s)dB^H(s)\int_0^t\phi(s)dB^H(s)\right]=\mathbb{E}\left[\int_0^t\psi(s)dW(s)\int_0^t\phi(s)dW(s)\right]=\int_0^t\psi(s)\phi(s)ds;
\end{align}

(3) if $H\in(\frac12,1)$,
\begin{align}\label{E121}
\mathbb{E}\left[\int_0^t\psi(s)dB^H(s)\int_0^t\phi(s)dB^H(s)\right]=\alpha_H\int_0^t\int_0^t\psi(r)\phi(u)|r-u|^{2H-2}dudr,
\end{align}
where $\alpha_H=H(2H-1)$.

Since $\dot{B}^H$ is a distribution instead of a classical function, the stochastic wave equation in \eqref{directp} does not hold pointwisely; it should be interpreted as an integral equation and its mild solution is defined as follows.

\begin{definition}
A stochastic process $u$ taking values in $L^2(D)$ is called a mild solution of \eqref{directp} if
\[
u(x,t)=\int_0^t\sin\left((t-\tau)(-\Delta)^{\frac12}\right)(-\Delta)^{-\frac12}f(x)h(\tau)d\tau+\int_0^t\sin\left((t-\tau)(-\Delta)^{\frac12}\right)(-\Delta)^{-\frac12}g(x)dB^H(\tau)
\]
is well-defined almost surely.
\end{definition}

It is known that the operator $-\Delta$ with the homogeneous Dirichlet boundary condition has an eigensystem $\{\lambda_k, \varphi_k\}_{k=1}^{\infty}$, where the eigenvalues satisfy $0<\lambda_1\leq\lambda_2\leq\cdots\leq\lambda_k\leq\cdots$ with $
\lambda_k\to\infty$ as $k\to\infty$, and the eigen-functions $\{\varphi_k\}_{k=1}^{\infty}$ form a complete and orthonormal basis for $L^2({D})$. For any function $v$ in $L^2(D)$, it can be written as
\[
 v(x)=\sum_{k=1}^{\infty}v_k\varphi_k(x), \quad v_k=(v,\varphi_k)_{L^2(D)}=\int_D v(x)\varphi_k(x)dx.
\]
Hence, if $u\in L^2(D)$ is a mild solution of \eqref{directp}, we have
\begin{equation}\label{Dsolution}
u(\cdot,t)=\sum_{k=1}^{\infty}u_k(t)\varphi_k,
\end{equation}
where
\begin{eqnarray}\label{Ik}
u_k(t)&=&(u(\cdot,t),\varphi_k)_{L^2(D)}\nonumber\\
&=&f_k\int_0^t\frac{\sin((t-\tau)\sqrt{\lambda_k})}{\sqrt{\lambda_k}}h(\tau)d\tau+g_k\int_0^t\frac{\sin((t-\tau)\sqrt{\lambda_k})}{\sqrt{\lambda_k}}dB^H(\tau)\nonumber\\
&=:&I_{k,1}(t)+I_{k,2}(t).
\end{eqnarray}
Here $f_k=(f,\varphi_k)_{L^2({D})}$, $g_k=(g,\varphi_k)_{L^2({D})}$, and $u_k(t)$ satisfies the stochastic differential equation
 \begin{equation}\label{SODE}
\begin{cases}
u_k''(t)+ \lambda_k u_k(t)=f_kh(t)+g_k\dot{B}^H(t), \quad  t\in(0,T),\\
u_k(0)=u_k'(0)=0.
\end{cases}
\end{equation}
In particular, if $g=0$, the stochastic differential equation \eqref{SODE} reduces to the deterministic differential
equation
\begin{equation*}
\begin{cases}
u_k''(t)+ \lambda_k u_k(t)=f_kh(t), \quad  t\in(0,T),\\
u_k(0)=u_k'(0)=0,
\end{cases}
\end{equation*}
which has the solution given as $I_{k,1}(t)$ in (\ref{Ik}).

\section{The direct problem}

In this section, we discuss the well-posedness of the direct problem. It is only necessary to address the stability since the existence and the uniqueness of the solution has already been considered (cf. \cite{Orsingher+1982,Walsh+1984,Tindel+2007}). We show that the mild solution \eqref{Dsolution} of the initial-boundary value problem \eqref{directp} is well-defined under the following assumptions.

\begin{assumption}\label{assumption}
Let $H\in(0,1)$ and $f,g\in L^2({D})$ with $\|g\|_{L^2(D)}\neq0$. Assume in addition that $h\in L^{\infty}(0,T)$ is a nonnegative function and its support has a positive measure.
\end{assumption}

It is easy to note that the mild solution (\ref{Dsolution}) satisfies
\begin{eqnarray*}
\|u(\cdot,t)\|^2_{L^2({D})} &=&\left\|\sum_{k=1}^{\infty}(I_{k,1}(t)+I_{k,2}(t))\varphi_k(\cdot)
\right\|_{L^2({D})}^2=\sum_{k=1}^{\infty}(I_{k,1}(t)+I_{k,2}(t))^2 \\
&\lesssim& \sum_{k=1}^{\infty} I^2_{k,1}(t)+\sum_{k=1}^{\infty}I_{k,2}^2(t),
\end{eqnarray*}
which gives
\begin{eqnarray}\label{ex}
\mathbb{E}\left[\|u\|^2_{L^2({D}\times[0,T])}\right] & =& \mathbb{E}\left[\int_0^T \|u(\cdot,t)\|^2_{L^2({D})}dt\right]\nonumber\\
&\lesssim&\mathbb{E}\bigg[\int_0^T \bigg(\sum_{k=1}^{\infty}I^2_{k,1}(t)+\sum_{k=1}^{\infty}I_{k,2}^2(t)\bigg)dt\bigg]\nonumber\\
&=&\int_0^T \bigg(\sum_{k=1}^{\infty}I^2_{k,1}(t)\bigg)dt+\mathbb{E}\bigg[\int_0^T \bigg(\sum_{k=1}^{\infty}I_{k,2}^2(t)\bigg)dt\bigg]\nonumber\\
&=&\sum_{k=1}^{\infty}\|I_{k,1}\|^2_{L^2(0,T)}+\int_0^T \bigg(\sum_{k=1}^{\infty}\mathbb{E}[I_{k,2}^2(t)]
\bigg)dt\nonumber\\
&=:&S_1+S_2.
\end{eqnarray}
Hereinafter, $a\lesssim b$ stands for $a\leqslant Cb$, where $C>0$ is a constant and its specific value is not required but should be clear from the context.

Let $G_{k}(t)=\sin(\sqrt{\lambda_k}t)/\sqrt{\lambda_k}$. Then
\begin{eqnarray}\label{Ik11}
\|I_{k,1}\|_{L^2(0,T)}&=&\left(\int_0^T\left|f_k\int_0^tG_k(\tau)h(t-\tau)d\tau\right|^2dt\right)^{\frac12}\notag\\
&\leq&\left(\int_0^T|f_k|^2\|h\|_{L^{\infty}(0,T)}^2\left(\int_0^T|G_k(\tau)|d\tau\right)^2dt\right)^{\frac12}\notag\\
&\leq& T^{\frac12}|f_k|\|G_{k}\|_{L^1(0,T)}\|h\|_{L^\infty(0,T)}.
\end{eqnarray}
A simple calculation gives
\begin{equation}\label{G}
\|G_{k}\|_{L^1(0,T)}=\int_0^T\left|\frac{\sin(\sqrt{\lambda_k}t)}{\sqrt{\lambda_k}}\right|dt\leq\int_0^T tdt=\frac{T^2}{2}.
\end{equation}
Combining (\ref{Ik11}) and (\ref{G}), we obtain
\begin{equation}\label{S1}
S_1=\sum_{k=1}^{\infty}\|I_{k,1}\|^2_{L^2(0,T)}
\leq\frac{T^{5}}{4}\sum_{k=1}^{\infty}|f_k|^2\|h\|^2_{L^\infty(0,T)}
=\frac{T^{5}}{4}\|h\|^2_{L^\infty(0,T)}\|f\|^2_{L^2({D})}.
\end{equation}

Next is to estimate $S_2$. It follows from \eqref{Ik} that
\begin{align}\label{exIk2}
\mathbb{E}[I_{k,2}^2(t)]=g_k^2\mathbb{E}\Big[\Big(\int_0^t\frac{\sin(\sqrt{\lambda_k}(t-\tau))}{\sqrt{\lambda_k}}dB^H(\tau)\Big)^2\Big].
\end{align}
Below, we discuss separately the cases $H=\frac{1}{2}$, $H\in(0,\frac{1}{2})$, and $H\in(\frac{1}{2},1)$  since the covariance operator of $B^H$ takes different forms in these three cases.

For the case $H=\frac{1}{2}$, it follows from It\^o's isometry \eqref{E12} that
\begin{align*}
\mathbb{E}\Big[\Big(\int_0^t\frac{\sin(\sqrt{\lambda_k}(t-\tau))}{\sqrt{\lambda_k}}dB^{\frac{1}{2}}(\tau)\Big)^2\Big]
=\int_0^t\frac{\sin^2(\sqrt{\lambda_k}(t-\tau))}{\lambda_k}d\tau\leq\int_0^t(t-\tau)^2d\tau=\frac{t^3}{3}.
\end{align*}

For the case $H\in(0,\frac{1}{2})$, we have from (\ref{E012}) that
\begin{eqnarray}\label{Ee012}
&&\mathbb{E}\Big[
\Big(\int_0^t\frac{\sin(\sqrt{\lambda_k}(t-\tau))}{\sqrt{\lambda_k}}dB^H(\tau)\Big)^2\Big]\nonumber\\
&=&\int_0^t \bigg\{c_H\Big[\big(\frac{t}{\tau}\big)^{H-\frac12}(t-\tau)^{H-\frac12}-(H-\frac12)\tau^{\frac12-H}\int_\tau^tu^{H-\frac32}(u-\tau)^{H-\frac12}du\Big]\frac{\sin(\sqrt{\lambda_k}(t-\tau))}{\sqrt{\lambda_k}}\nonumber\\
&&+\int_\tau^t\Big(\frac{\sin(\sqrt{\lambda_k}(t-u))}{\sqrt{\lambda_k}}-\frac{\sin(\sqrt{\lambda_k}(t-\tau))}{\sqrt{\lambda_k}}\Big)c_H\big(\frac{u}{\tau}\big)^{H-\frac12}(H-\frac12)(u-\tau)^{H-\frac32}du\bigg\}^2d\tau\nonumber\\
&\lesssim&\int_0^t\Big[\big(\frac{t}{\tau}\big)^{H-\frac12}(t-\tau)^{H-\frac12}\frac{\sin(\sqrt{\lambda_k}(t-\tau))}{\sqrt{\lambda_k}}\Big]^2d\tau\notag\\
&&+\int_0^t\Big[\tau^{\frac12-H}\Big(\int_\tau^tu^{H-\frac32}(u-\tau)^{H-\frac12}du\Big)\frac{\sin(\sqrt{\lambda_k}(t-\tau))}{\sqrt{\lambda_k}}\Big]^2d\tau\nonumber\\
&&+\int_0^t\Big[\int_\tau^t\Big(\frac{\sin(\sqrt{\lambda_k}(t-u))}{\sqrt{\lambda_k}}-\frac{\sin(\sqrt{\lambda_k}(t-\tau))}{\sqrt{\lambda_k}}\Big)\big(\frac{u}{\tau}\big)^{H-\frac12}(u-\tau)^{H-\frac32}du\Big]^2d\tau\nonumber\\
&=:&I_1(t)+I_2(t)+I_3(t).
\end{eqnarray}
A simple calculation gives
\begin{eqnarray}\label{I1}
I_1(t)&=&\int_0^t\big(\frac{t}{\tau}\big)^{2H-1}(t-\tau)^{2H-1}\frac{\sin^2(\sqrt{\lambda_k}(t-\tau))}{\lambda_k}d\tau\nonumber\\
&\leq&\int_0^t(\frac{t}{\tau})^{2H-1}(t-\tau)^{2H-1}(t-\tau)^2d\tau\nonumber\\
&\leq&\int_0^t(t-\tau)^{2H+1}d\tau=\frac{t^{2H+2}}{2H+2}.
\end{eqnarray}
Similarly, we have
\begin{align}\label{I21}
I_2(t)&\lesssim\int_0^t\tau^{1-2H}(t-\tau)^2\Big(\int_\tau^tu^{H-\frac32}(u-\tau)^{H-\frac12}du\Big)^2d\tau.
\end{align}
Using the binomial expansion leads to
\begin{eqnarray*}
\int_\tau^tu^{H-\frac32}(u-\tau)^{H-\frac12}du
&=&\int_\tau^t u^{2H-2}\bigg[\sum_{n=0}^{\infty}
\begin{pmatrix}
H-\frac12\\
n
\end{pmatrix}
\big(-\frac{\tau}{u}\big)^n\bigg]du \\
&=&\sum_{n=0}^{\infty}
\begin{pmatrix}
H-\frac12\\
n
\end{pmatrix}(-1)^n\tau^n\left(\frac{t^{2H-1-n}-\tau^{2H-1-n}}{2H-1-n}\right)\\
&\leq&(t^{2H-1}-\tau^{2H-1})\sum_{n=0}^{\infty}
\begin{pmatrix}
H-\frac12\\
n
\end{pmatrix}\frac{(-1)^n}{2H-1-n}\\
&\lesssim& t^{2H-1}-\tau^{2H-1}.
\end{eqnarray*}
Combing the above estimates, we obtain from (\ref{I21}) that
\begin{eqnarray}\label{I2}
I_2(t)&\lesssim&\int_0^t\tau^{1-2H}(t-\tau)^2\left(t^{2H-1}-\tau^{2H-1}\right)^2d\tau\nonumber\\
&\lesssim&\int_0^t\tau^{1-2H}(t-\tau)^2\left(t^{4H-2}+\tau^{4H-2}\right)d\tau\nonumber\\
&\lesssim&\int_0^t\tau^{2H-1}(t-\tau)^2d\tau\nonumber\\
&\leq&\int_0^t\tau^{2H-1}t^2d\tau
\lesssim t^{2H+2}.
\end{eqnarray}
By the mean value theorem, there holds
\begin{eqnarray}\label{I3}
I_3(t)&\leq&\int_0^t\Big[\int_\tau^t|\tau-u|(\frac{u}{\tau})^{H-\frac12}(u-\tau)^{H-\frac32}du\Big]^2d\tau\nonumber\\
&\leq&\int_0^t\Big[\int_\tau^t(u-\tau)^{H-\frac12}du\Big]^2d\tau \lesssim t^{2H+2}.
\end{eqnarray}
Substituting (\ref{I1}), (\ref{I2}) and (\ref{I3}) into (\ref{Ee012}), we obtain for $H\in(0,\frac12)$ that
\begin{align*}
\mathbb{E}\Big[
\Big(\int_0^t\frac{\sin(\sqrt{\lambda_k}(t-\tau))}{\sqrt{\lambda_k}}dB^H(\tau)\Big)^2\Big]\lesssim t^{2H+2}.
\end{align*}

For the case $H\in(\frac{1}{2},1)$, we have from \eqref{E121} that
\begin{eqnarray*}
&&\mathbb{E}\Big[
\Big(\int_0^t\frac{\sin(\sqrt{\lambda_k}(t-\tau))}{\sqrt{\lambda_k}}dB^H(\tau)\Big)^2\Big]\\
&=&\alpha_H\int_0^t\int_0^t\frac{\sin(\sqrt{\lambda_k}(t-p))}{\sqrt{\lambda_k}}\frac{\sin(\sqrt{\lambda_k}(t-q))}{\sqrt{\lambda_k}}|p-q|^{2H-2}dpdq\\
&\leq&\alpha_H\int_0^t\int_0^t(t-p)(t-q)|p-q|^{2H-2}dpdq\\
&=&2\alpha_H\int_0^t\int_q^tpq(p-q)^{2H-2}dpdq,\\
&=&2\alpha_H\int_0^tq\frac{(t-q)^{2H-1}}{2H-1}\big(t-\frac{t-q}{2H}\big)dq\\
&\lesssim& t\int_0^tq\frac{(t-q)^{2H-1}}{2H-1}dq
\lesssim t^{2H+2}.
\end{eqnarray*}

Combing the above estimates, we obtain for any $H\in (0,1)$ that
\begin{align}\label{eq:EeH01}
\mathbb{E}\Big[\Big(\int_0^t\frac{\sin(\sqrt{\lambda_k}(t-\tau))}{\sqrt{\lambda_k}}dB^H(\tau)\Big)^2\Big]
\lesssim t^{2H+2},
\end{align}
which implies the following stability estimate for the mild solution (\ref{Dsolution}).

\begin{theorem}\label{thereom1}
Let Assumption \ref{assumption} hold. Then the stochastic process $u$
given in (\ref{Dsolution}) satisfies
\begin{equation*}
\mathbb{E}[\|u\|^2_{L^2({D}\times[0,T])}]\lesssim\frac{T^5}{4}\|h\|^{2}_{L^\infty(0,T)}\|f\|^
{2}_{L^2({D})}+\frac{T^{2H+3}}{2H+3}\|g\|^{2}_{L^2({D})}.
\end{equation*}
\end{theorem}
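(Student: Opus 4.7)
The plan is to assemble the theorem directly from the two estimates $S_1$ and $S_2$ already set up in the decomposition \eqref{ex}. Since \eqref{ex} reads
\[
\mathbb{E}[\|u\|^2_{L^2(D\times[0,T])}] \lesssim S_1 + S_2,
\]
with $S_1=\sum_k\|I_{k,1}\|_{L^2(0,T)}^2$ and $S_2=\int_0^T \sum_k \mathbb{E}[I_{k,2}^2(t)]\,dt$, I only need to collect the two pieces.

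For the deterministic part $S_1$, I would simply invoke the already derived bound \eqref{S1}, namely $S_1\leq \tfrac{T^5}{4}\|h\|_{L^\infty(0,T)}^2\|f\|_{L^2(D)}^2$, which handles the first summand in the theorem without further work.

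For the stochastic part $S_2$, I would start from \eqref{exIk2}, which identifies $\mathbb{E}[I_{k,2}^2(t)]$ as $g_k^2$ times the second-moment expression bounded in \eqref{eq:EeH01} by $t^{2H+2}$, uniformly in $k$ and across the three regimes $H\in(0,\tfrac12)$, $H=\tfrac12$, $H\in(\tfrac12,1)$. Summing over $k$ with Parseval's identity $\sum_k g_k^2 = \|g\|_{L^2(D)}^2$ and then integrating the time factor via $\int_0^T t^{2H+2}\,dt = T^{2H+3}/(2H+3)$ produces
\[
S_2 \lesssim \|g\|_{L^2(D)}^2 \int_0^T t^{2H+2}\,dt = \frac{T^{2H+3}}{2H+3}\|g\|_{L^2(D)}^2.
\]
Adding the two bounds yields the conclusion of the theorem.

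There is essentially no obstacle here, since the real work --- the case-by-case estimation of the Itô-type isometry that produced \eqref{eq:EeH01} --- has already been carried out. The only point to be careful about is to make sure the implicit constants absorbed in $\lesssim$ are independent of $k$, which is clear from \eqref{eq:EeH01} since none of the intermediate bounds $I_1,I_2,I_3$ depended on $\lambda_k$ (the factor $\sin^2(\sqrt{\lambda_k}\,\cdot)/\lambda_k$ was always controlled by $(t-\tau)^2$), and to swap sum and expectation in $S_2$, which is justified by Tonelli's theorem since each $\mathbb{E}[I_{k,2}^2(t)]$ is nonnegative.
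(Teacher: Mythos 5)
Your proposal is correct and matches the paper's argument exactly: the paper's proof is precisely the assembly of \eqref{ex}, \eqref{S1}, \eqref{exIk2}, and \eqref{eq:EeH01}, with the summation over $k$ and integration in $t$ carried out as you describe. Your added remarks on Tonelli and the $k$-uniformity of the constants are sound and only make explicit what the paper leaves implicit.
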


\begin{proof}
The proof follows easily from (\ref{ex}), (\ref{S1}), (\ref{exIk2}), and
(\ref{eq:EeH01}).
\end{proof}

\begin{remark}
The above stability estimate is consistent with \cite[Theorem 3.1]{FLW2020}. The stochastic wave equation (\ref{directp}) can be viewed as the case $\alpha=2$ of the problem (2.1) in \cite{FLW2020}, where
\[
E_{2,2}(z^2)=\frac{\sinh(z)}{z}
\]
and
\[
(t-\tau)^{\alpha-1}E_{\alpha,\alpha}\big(-(t-\tau)^\alpha\lambda_k\big)\big|_{\alpha=2}=(t-\tau)\frac{\sinh\sqrt{-(t-\tau)^2\lambda_k}}{\sqrt{-(t-\tau)^2\lambda_k}}=\frac{\sin(\sqrt{\lambda_k}(t-\tau))}{\sqrt{\lambda_k}}.
\]
\end{remark}

\section{The inverse problem}

In this section, we consider the inverse problem of reconstructing $f$ and $|g|$ from the empirical expectation and correlation of the final-time data $u(x,T)$. More specifically, the data is assumed to be given by
\[
 u_k(T):=(u(\cdot,T),\varphi_k(\cdot))_{L^2(D)}.
\]
We discuss the uniqueness and the issue of instability for the inverse problem, separately.

\subsection{Uniqueness}

It follows from (\ref{Dsolution}), (\ref{Ik})  and \eqref{exIk2} that
\begin{equation}\label{Iexp}
\mathbb{E}(u_k(T))=f_k\int_0^Th(\tau)\frac{\sin(\sqrt{\lambda_k}(T-\tau))}{\sqrt{\lambda_k}}d\tau
\end{equation}
and
\begin{align}\label{Icvar}
\text{Cov}(u_k(T),u_l(T))
=g_kg_l
\mathbb{E}\Big[\int_0^T\frac{\sin(\sqrt{\lambda_k}(T-\tau))}{\sqrt{\lambda_k}}dB^H(\tau)\int_0^T\frac{\sin(\sqrt{\lambda_l}(T-\tau))}{\sqrt{\lambda_l}}dB^H(\tau)\Big].
\end{align}

First, we consider the reconstruction of $f$. Let
\begin{align*}
h_k=\int_0^Th(\tau)\sin(\sqrt{\lambda_k}(T-\tau))d\tau=\int_0^Th(T-\tau)\sin(\sqrt{\lambda_k}\tau)d\tau.
\end{align*}
By (\ref{Iexp}), it is impossible to compute $f_k$ if $h_k=0$, which indicates nonuniqueness to reconstruct $f$. It can be seen that $h_k$ may be zero for some $h(t)$ satisfying Assumption \ref{assumption}. Clearly, $\sin(\sqrt{\lambda_k}t)$ is an oscillatory function; the larger the $\lambda_k$ is, the more oscillatory the function $\sin(\sqrt{\lambda_k}t)$ is. Hence the uniqueness cannot be guaranteed to reconstruct $f$ for a general $h$. Below we consider two special cases for the function $h$ where it is sufficient to ensure the uniqueness for the inverse problem.

Case $1$: $h(t)\equiv C>0$ is a constant function and $D=(0,\pi)$. In this case, it is clear that the eigen-system of the Laplacian operator $-\Delta$ is
\[
\lambda_k=k^2, \quad \varphi_k=\sqrt{\frac2\pi}\sin(kx), \quad k=1,2,...
\]
A simple calculation yields
\begin{align*}
h_k~=C\int_0^T\sin(k\tau)d\tau~=\frac Ck(1-\cos(kT)).
\end{align*}
Note that the final time $T$ can be chosen arbitrarily. For example, we can choose  $T$ as a rational number so that $h_k>0$ for any $k\in \mathbb{N}$.

Case $2$: $h(t)$ is a continuous and monotonously increasing function and $D=(0,\pi)$. Divide the time interval $(0,T]$ into $n+1$ subintervals as $(t_0,t_1],\dots$, $(t_i,t_{i+1}],\dots$, $(t_n,t_{n+1}]$, where $kt_i=i\pi$, $i=0,1,2,...,n$, and $t_0=0$, $t_{n+1}=T$. Since $h(t)$ is continuous and $\sin(kt)$ is integrable and does not change sign on each subinterval $(t_{i},t_{i+1}]$, by the mean value theorem for the definite integral, there exists $\xi_i\in(t_{i},t_{i+1}]$ such that
\begin{eqnarray*}
h_k&=&\int_0^Th(T-\tau)\sin(k\tau)d\tau=\sum_{i=0}^{n}h(T-\xi_i)\int_{t_i}^{t_i+1}\sin(k\tau)d\tau\nonumber\\
&=&\sum_{i=0}^{n-1}h(T-\xi_i)\frac{2\cdot(-1)^i}{k}+h(T-\xi_n)\Big(\frac{(-1)^n-\cos(kT)}{k}\Big).
\end{eqnarray*}
If $n$ is even, since $h(t)$ is monotonously increasing,
\begin{align*}
h_k=\sum_{i=2m,m=0}^{\frac{n-2}{2}}(h(T-\xi_i)-h(T-\xi_{i+1}))\frac2k+h(T-\xi_n)\Big(\frac{1-\cos(kT)}{k}\Big)>0.
\end{align*}
If $n$ is odd, similarly, we have
\begin{align*}
h_k=\sum_{i=2m,m=0}^{\frac{n-3}{2}}(h(T-\xi_i)-h(T-\xi_{i+1}))\frac2k+h(T-\xi_{n-1})\frac{2}{k}+h(T-\xi_n)\Big(\frac{-1-\cos(kT)}{k}\Big)>0.
\end{align*}
Here, the final time $T$ is chosen as a rational number. If $h(t)$ is strictly monotonously increasing, $T$ can be any real number.

\begin{remark}
Although we only consider the one-dimensional case $D=(0,\pi)$, in fact, if $D$ is a bounded domain in $\mathbb{R}^d, d\in\{1,2,3\}$, then the $j$-th eigenvalue $\lambda_j$ of the homogeneous Dirichlet boundary problem for the Laplacian operator $-\Delta$ in $D$ satisfies
\begin{equation*}
c_0 j^{\frac2d}\leq\lambda_j\leq c_1 j^{\frac2d},
\end{equation*}
where $j\in \mathbb{N}$, and the constants $c_0, c_1$ are independent of the index $j$ \cite{Dengnumerical+2020,Strauss+2008}. Therefore, if we choose $T$ properly, the uniqueness can still be obtained to recover $f$ when the function $h(t)$ belongs to the two cases considered above.
\end{remark}

\begin{remark}
From (\ref{Iexp}), if $f_k\neq0, D=(0,\pi)$, then there holds
\begin{equation}\label{Iexp1}
\frac{k\mathbb{E}(u_k(T))}{f_k}=\int_0^Th(T-\tau)\sin(k\tau)d\tau.
\end{equation}
The right hand side of (\ref{Iexp1}) can be understood as the sine transform of $h(T-\tau)$. Alternatively, (\ref{Iexp1}) can be written as
\begin{equation}\label{Iexp2}
k\mathbb{E}(u_k(T))=\sqrt{\frac2\pi}\int_0^T\int_0^\pi h(T-\tau)f(x)\sin(kx)\sin(k\tau)dxd\tau.
\end{equation}
Therefore, $h(T-t)f(x)$ can be reconstructed by taking the inverse two-dimensional sine transform on both sides of (\ref{Iexp2}) \cite{Bao+Chow+Li+Zhou+2014}.
\end{remark}

Next, we consider the reconstruction of $|g|$. Let
\begin{align}\label{guniqbm}
E_{kl}:=\mathbb{E}\Big[\int_0^T\frac{\sin(\sqrt{\lambda_k}(T-\tau))}{\sqrt{\lambda_k}}dB^H(\tau)\int_0^T\frac{\sin(\sqrt{\lambda_l}(T-\tau))}{\sqrt{\lambda_l}}dB^H(\tau)\Big].
\end{align}
Similarly, since $\sin(\sqrt{\lambda_k}t)$ is an oscillatory function, and the larger the $\lambda_k$ is the more oscillatory the function $\sin(\sqrt{\lambda_k}t)$ is, we can see from (\ref{guniqbm}) that it is difficult to guarantee $E_{kl}\neq0$, particularly for a general $H\in(0, 1)$. Below, we only consider the case $H=\frac{1}{2}$ and $D=(0,\pi) $, where the uniqueness can be obtained to recover $|g|$.

By (\ref{E12}), a straightforward calculation gives
\begin{eqnarray}\label{Ekl12}
E_{kl}&=&\int_0^T\frac{\sin(\sqrt{\lambda_k}(T-\tau))}{\sqrt{\lambda_k}}\frac{\sin(\sqrt{\lambda_l}(T-\tau))}{\sqrt{\lambda_l}}d\tau\nonumber\\
&=&\begin{cases}  \frac{1}{2\lambda_k}\left(T-\frac{\sin(2\sqrt{\lambda_k}T)}{2\sqrt{\lambda_k}}\right)>0,\quad & k=l,\\
\frac{1}{\sqrt{\lambda_k\lambda_l}}\bigg(-\frac{\sin((\sqrt{\lambda_k}+\sqrt{\lambda_l})T)}{2(\sqrt{\lambda_k}+\sqrt{\lambda_l})}+\frac{\sin((\sqrt{\lambda_k}-\sqrt{\lambda_l})T)}{2(\sqrt{\lambda_k}-\sqrt{\lambda_l})}\bigg),\quad & k\neq l.
\end{cases}
\end{eqnarray}
When $D=(0,\pi)$, and $k\neq l$, we have
\begin{align*}
E_{kl}=\frac{1}{kl}\bigg(-\frac{\sin((k+l)T)}{2(k+l)}+\frac{\sin((k-l)T)}{2(k-l)}\bigg).
\end{align*}
Without loss of generality, let $k>l$, then it holds that $E_{kl}\neq0$ for any algebraic number $T$. Otherwise, if $E_{kl}=0$, then
\[
\frac{\sin((k-l)T)}{2(k-l)}-\frac{\sin((k+l)T)}{2(k+l)}=0,
\]
which implies
\begin{equation*}
(k+l)e^{i(k-l)T}-(k+l)e^{-i(k-l)T}-(k-l)e^{i(k+l)T}+(k-l)e^{-i(k+l)T}=0.
\end{equation*}
Since $\{i(k-l)T, -i(k-l)T, i(k+l)T, -i(k+l)T\}$ is a set of distinct algebraic numbers, by the Lindemann--Weierstrass theorem (cf. \cite[Theorem 1.4]{ab90}), we have for any non-zero algebraic numbers $c_1, c_2, c_3, c_4$ that
\begin{equation*}
c_1e^{i(k-l)T}+c_2e^{-i(k-l)T}+c_3e^{i(k+l)T}+c_4e^{-i(k+l)T}\neq0,
\end{equation*}
which leads to a contradiction. Therefore $E_{kl}\neq0$ for any $k\neq l\in \mathbb{N}$. Combining with (\ref{Ekl12}), we deduce for any $k,l\in\mathbb{N}$ that $E_{kl}\neq0$ for any algebraic number $T$.

The following results concern the statement of uniqueness for the inverse problem.

\begin{theorem}\label{thereom2}
Let Assumption \ref{assumption} hold.
\begin{itemize}
  \item [(1)]If $h(t)$ is monotonously increasing and $T$ is a rational number, or if $h(t)$ is strictly monotonously increasing and $T$ is a real number. Then $f$ can be uniquely determined by the data set
$\{\mathbb{E}(u_k(T)): k \in\mathbb{N}\}$.
  \item [(2)]If T is any algebraic number, then the source term $g$ up to sign, i.e. $\pm g$, can be uniquely determined by the data set $\{\text{Cov}(u_k(T),u_l(T)): k,l \in \mathbb{N}\}$.
\end{itemize}
\end{theorem}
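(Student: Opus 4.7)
The plan is to deduce both uniqueness statements as essentially algebraic consequences of the coefficient-wise formulas \eqref{Iexp} and \eqref{Icvar}, combined with the nonvanishing of the scalar factors $h_k$ and $E_{kl}$ that has already been established in the discussion preceding the theorem. Because of the modal expansion \eqref{Dsolution} and the orthonormality of $\{\varphi_k\}$, recovering $f$ in $L^2(D)$ is equivalent to recovering every Fourier coefficient $f_k$, and recovering $\pm g$ is equivalent to recovering every $\pm g_k$ consistently with a single choice of global sign.

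For part (1), I would start from \eqref{Iexp}, which in the one-dimensional setting $D=(0,\pi)$ with $\lambda_k=k^2$ reads $\mathbb{E}(u_k(T))=f_k h_k/k$ for every $k\in\mathbb{N}$. Under either monotonicity hypothesis on $h$ together with the stated condition on $T$, the case analysis carried out just above the theorem statement shows $h_k>0$ for every $k$. Consequently
\[
f_k=\frac{k\,\mathbb{E}(u_k(T))}{h_k}
\]
is uniquely recovered from the data, and reassembling $f=\sum_k f_k\varphi_k$ in $L^2(D)$ determines $f$ uniquely. Equivalently, two admissible pairs $(f^{(1)},g^{(1)})$ and $(f^{(2)},g^{(2)})$ producing identical expectation data must satisfy $f^{(1)}_k=f^{(2)}_k$ for every $k$, hence $f^{(1)}=f^{(2)}$ in $L^2(D)$.

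For part (2), which is implicitly restricted to $H=\frac12$ by the preceding analysis, I would argue from \eqref{Icvar} that $\text{Cov}(u_k(T),u_l(T))=g_k g_l E_{kl}$ for every $k,l\in\mathbb{N}$. Setting $k=l$ in \eqref{Ekl12} gives $E_{kk}>0$, so the diagonal entries immediately determine $|g_k|$; in particular any $g_k=0$ is unambiguously identified. To settle the relative signs of the nonzero coefficients, I would use the off-diagonal entries: for algebraic $T$, the Lindemann--Weierstrass argument given just before the theorem guarantees $E_{kl}\neq 0$ for all $k\neq l$, so each product $g_k g_l$ is uniquely determined. Fixing the sign $\varepsilon\in\{\pm1\}$ of the first nonzero coefficient $g_{k_0}$, every other nonzero $g_k$ then inherits its sign through the known product $g_{k_0}g_k$, yielding $g$ up to the single global ambiguity $g\mapsto -g$, which is inherent because the covariance is bilinear in $g$.

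The substantive analytical work---nonvanishing of $h_k$ under the monotonicity hypotheses, strict positivity of the diagonal $E_{kk}$, and nonvanishing of the off-diagonals $E_{kl}$ via Lindemann--Weierstrass---has already been carried out before the theorem. I therefore do not anticipate a serious obstacle: the proof is a clean bookkeeping assembly of these facts with \eqref{Iexp}--\eqref{Icvar}. The only subtlety worth flagging is to check that the global sign ambiguity in part (2) is genuinely the only one, which amounts to observing that the map $g\mapsto (g_k g_l)_{k,l}$ has fibers exactly $\{g,-g\}$ on the set of $L^2(D)$ functions with $\|g\|_{L^2(D)}\neq 0$.
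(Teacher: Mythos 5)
Your proposal is correct and follows essentially the same route as the paper: the theorem is stated as a summary of the computations immediately preceding it (the positivity of $h_k$ in the two cases for $h$, the positivity of $E_{kk}$, and the Lindemann--Weierstrass argument giving $E_{kl}\neq 0$ for algebraic $T$ when $H=\tfrac12$), and your assembly of these facts via \eqref{Iexp} and \eqref{Icvar} -- recovering $f_k$ by division and $\pm g_k$ from the products $g_kg_l$ with a single global sign choice -- is exactly the intended argument. Your remark that part (2) is implicitly restricted to $H=\tfrac12$ and $D=(0,\pi)$ correctly identifies the scope of the paper's analysis.
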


\subsection{Instability}

In this subsection, we demonstrate the inverse problem is unstable to recover $f$ and $|g|$. First, it is clear to note that
\begin{equation*}
\left|\int_0^Th(\tau)\frac{\sin(\sqrt{\lambda_k}(T-\tau))}{\sqrt{\lambda_k}}d\tau\right|\leq\frac{1}{\sqrt{\lambda_k}}\int_0^Th(\tau)d\tau\rightarrow0 ~ \text{as } k\rightarrow\infty,
\end{equation*}
which shows that it is unstable to recover $f$ due to (\ref{Iexp}).

Next, we discuss the instability of recovering $g^2_k$, which is equivalent to the instability of recovering $|g|$. We discuss the three different cases  $H=\frac{1}{2}, H\in(0,\frac{1}{2})$ and $H\in(\frac{1}{2},1)$, separately.

For the case $H=\frac{1}{2}$, by (\ref{E12}), there holds
\begin{align*}
\mathbb{E}\Big[
\Big(\int_0^T\frac{\sin(\sqrt{\lambda_k}(T-\tau))}{\sqrt{\lambda_k}}dB^{\frac{1}{2}}(\tau)\Big)^2\Big]\leq\frac{T}{\lambda_k}.
\end{align*}

For the case $H\in(0,\frac{1}{2})$, we consider (\ref{Ee012}) with $t=T$ and the estimate of $I_j(T), j=1,2,3.$
For $I_1(T)$, a simple calculation yields
\begin{eqnarray*}
I_1(T)&=&\int_0^T(\frac{T}{\tau})^{2H-1}(T-\tau)^{2H-1}\frac{\sin^2(\sqrt{\lambda_k}(T-\tau))}{\lambda_k}d\tau\nonumber\\
&\leq&\frac{1}{\lambda_k}\int_0^T(\frac{T}{\tau})^{2H-1}(T-\tau)^{2H-1}d\tau
\leq\frac{1}{\lambda_k}\frac{T^{2H}}{2H}.
\end{eqnarray*}
About $I_2(T)$, we have
\begin{eqnarray*}
I_2(T)&\lesssim&\frac{1}{\lambda_k}\int_0^T\tau^{1-2H}\Big(\int_\tau^Tu^{H-\frac32}(u-\tau)^{H-\frac12}du\Big)^2d\tau\nonumber\\
&\lesssim&\frac{1}{\lambda_k}\int_0^T\tau^{1-2H}(T^{2H-1}-\tau^{2H-1})^2d\tau\nonumber\\
&\lesssim&\frac{1}{\lambda_k}\int_0^T\tau^{1-2H}(T^{4H-2}+\tau^{4H-2})d\tau\nonumber\\
&\lesssim&\frac{1}{\lambda_k}\int_0^T\tau^{2H-1}d\tau
\lesssim\frac{1}{\lambda_k}\frac{T^{2H}}{2H}.
\end{eqnarray*}
About $I_3(T)$, there holds
\begin{eqnarray*}
I_3(T)&=&\int_0^T\Big[\int_\tau^T(\frac{\sin(\sqrt{\lambda_k}(T-u))}{\sqrt{\lambda_k}}-\frac{\sin(\sqrt{\lambda_k}(T-\tau))}{\sqrt{\lambda_k}})(\frac{u}{\tau})^{H-\frac12}(u-\tau)^{H-\frac32}du\Big]^2d\tau\\
&\lesssim&\int_0^{t_*}\Big[\int_\tau^{t_*}(\frac{\sin(\sqrt{\lambda_k}(T-u))}{\sqrt{\lambda_k}}-\frac{\sin(\sqrt{\lambda_k}(T-\tau))}{\sqrt{\lambda_k}})(\frac{u}{\tau})^{H-\frac12}(u-\tau)^{H-\frac32}du\Big]^2d\tau\\
&&+\int_0^{t_*}\Big[\int_{t_*}^T(\frac{\sin(\sqrt{\lambda_k}(T-u))}{\sqrt{\lambda_k}}-\frac{\sin(\sqrt{\lambda_k}(T-\tau))}{\sqrt{\lambda_k}})(\frac{u}{\tau})^{H-\frac12}(u-\tau)^{H-\frac32}du\Big]^2d\tau\\
&&+\int_{t_*}^T\Big[\int_\tau^{\tau+t_*}(\frac{\sin(\sqrt{\lambda_k}(T-u))}{\sqrt{\lambda_k}}-\frac{\sin(\sqrt{\lambda_k}(T-\tau))}{\sqrt{\lambda_k}})(\frac{u}{\tau})^{H-\frac12}(u-\tau)^{H-\frac32}du\Big]^2d\tau\\
&& +\int_{t_*}^T\Big[\int_{\tau+t_*}^T(\frac{\sin(\sqrt{\lambda_k}(T-u))}{\sqrt{\lambda_k}}-\frac{\sin(\sqrt{\lambda_k}(T-\tau))}{\sqrt{\lambda_k}})(\frac{u}{\tau})^{H-\frac12}(u-\tau)^{H-\frac32}du\Big]^2d\tau\\
&=:&K_1+K_2+K_3+K_4.
\end{eqnarray*}
For $K_1$ and $K_3$, we have from the differential mean value theorem that 
\begin{eqnarray*}
K_1&\lesssim&\int_0^{t_*}\Big[\int_\tau^{t_*}(\frac{u}{\tau})^{H-\frac12}(u-\tau)^{H-\frac12}du\Big]^2d\tau\\
&\lesssim&\int_0^{t_*}\Big[\int_\tau^{t_*}(u-\tau)^{H-\frac12}du\big]^2d\tau\lesssim\int_0^{t_*}(t_*-\tau)^{2H+1}d\tau\lesssim t_*^{2H+2}
\end{eqnarray*}
and
\begin{eqnarray*}
K_3 &\lesssim&\int_{t_*}^T\Big[\int_\tau^{\tau+t_*}(u-\tau)^{H-\frac12}du\Big]^2d\tau\\
&\lesssim&\int_{t_*}^Tt_*^{2H+1}d\tau\lesssim t_*^{2H+1}.
\end{eqnarray*}
For $K_2$ and $K_4$, it follows from straightforward calculations that
\begin{eqnarray*}
K_2&\lesssim&\frac{1}{\lambda_k}\int_0^{t_*}\Big[\int_{t_*}^T(u-\tau)^{H-\frac32}du\Big]^2d\tau\\
&\lesssim&\frac{1}{\lambda_k}\int_0^{t_*}(t_*-\tau)^{2H-1}d\tau\lesssim\frac{1}{\lambda_k}t_*^{2H}
\end{eqnarray*}
and
\begin{eqnarray*}
K_4&\lesssim&\frac{1}{\lambda_k}\int_{t_*}^T\left[\int_{\tau+t_*}^T(u-\tau)^{H-\frac32}du\right]^2d\tau\\
&\lesssim&\frac{1}{\lambda_k}\int_{t_*}^T((T-\tau)^{2H-1}+t_*^{2H-1})d\tau\lesssim\frac{1}{\lambda_k}+\frac{1}{\lambda_k}t_*^{2H-1}.
\end{eqnarray*}
Combining the above estimates and choosing $t_*=\lambda_k^{-1}$, we deduce
\begin{align*}
I_3(T)\lesssim t_*^{2H+2}+\frac{1}{\lambda_k}t_*^{2H}+t_*^{2H+1}+\frac{1}{\lambda_k}+\frac{1}{\lambda_k}t_*^{2H-1}\lesssim\lambda_k^{-2H}.
\end{align*}

For the case $H\in(\frac{1}{2},1)$, we have
\begin{eqnarray*}
&&\mathbb{E}\Big[
\Big(\int_0^T\frac{\sin(\sqrt{\lambda_k}(T-\tau))}{\sqrt{\lambda_k}}dB^H(\tau)\Big)^2\Big]\\
&=&\alpha_H\int_0^T\int_0^T\frac{\sin(\sqrt{\lambda_k}(T-p))}{\sqrt{\lambda_k}}\frac{\sin(\sqrt{\lambda_k}(T-q))}{\sqrt{\lambda_k}}|p-q|^{2H-2}dpdq\\
&\lesssim&\frac{1}{\lambda_k}\int_0^T\int_0^T|p-q|^{2H-2}dpdq
\lesssim\frac{1}{\lambda_k}T^{2H}.
\end{eqnarray*}

Hence, we conclude for any $H\in (0,1)$ that
\begin{align*}
\mathbb{E}\Big[
\Big(\int_0^T\frac{\sin(\sqrt{\lambda_k}(t-\tau))}{\sqrt{\lambda_k}}dB^H(\tau)\Big)^2\Big]\lesssim\lambda_k^{-\gamma},
\end{align*}
where $\gamma=\min\{2H,1\}$. When $k\rightarrow\infty$, $\lambda_k^{-\gamma}\rightarrow0$, which shows that it is unstable to recover $|g|$.

\begin{theorem}\label{theroem3f}
The inverse problem is unstable to recover the source terms $f$ and $\pm g$. Moreover, the following estimates hold:
\begin{equation*}
\left|\int_0^Th(\tau)\frac{\sin(\sqrt{\lambda_k}(T-\tau))}{\sqrt{\lambda_k}}d\tau\right|\lesssim\frac{1}{\sqrt{\lambda_k}}
\end{equation*}
and
\begin{equation*}
\mathbb{E}\Big[
\Big(\int_0^T\frac{\sin(\sqrt{\lambda_k}(T-\tau))}{\sqrt{\lambda_k}}dB^H(\tau)\Big)^2\Big]\lesssim\lambda_k^{-\gamma},
\end{equation*}
where $\gamma=\min\{2H,1\}$.
\end{theorem}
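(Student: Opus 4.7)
The plan is to prove the two displayed estimates separately and then deduce instability of the two reconstructions, since in the first case \eqref{Iexp} gives $\mathbb{E}(u_k(T))=f_k$ times the quantity being estimated, and in the second case \eqref{exIk2} gives $\mathbb{E}[I_{k,2}^2(T)]=g_k^2$ times the quantity being estimated; once these prefactors decay in $k$, recovering $f_k$ and $g_k^2$ from the left-hand data is ill-posed.

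For the first bound the approach is elementary. Pulling $1/\sqrt{\lambda_k}$ outside the integral and using $|\sin(\sqrt{\lambda_k}(T-\tau))|\le 1$ together with the nonnegativity of $h$ from Assumption \ref{assumption} leaves $\int_0^T h(\tau)\,d\tau\le T\|h\|_{L^\infty(0,T)}$, which is finite and independent of $k$. This yields the asserted $\lambda_k^{-1/2}$ rate.

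For the second bound the strategy is to split into the three regimes $H=\tfrac12$, $H\in(\tfrac12,1)$, and $H\in(0,\tfrac12)$, corresponding to the three isometry formulas \eqref{E12}, \eqref{E121}, and \eqref{E012}. When $H=\tfrac12$, It\^o's isometry reduces the expectation to $\int_0^T\sin^2(\sqrt{\lambda_k}(T-\tau))/\lambda_k\,d\tau\le T/\lambda_k$. When $H\in(\tfrac12,1)$, factoring out $1/\lambda_k$ via $|\sin|\le 1$ leaves the double integral $\int_0^T\!\int_0^T|p-q|^{2H-2}dp\,dq$, finite because $2H-2>-1$. In both of these regimes the exponent is $\gamma=1=\min\{2H,1\}$, matching the claim.

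The main obstacle lies in the remaining regime $H\in(0,\tfrac12)$, where the expectation decomposes via \eqref{Ee012} into the three pieces $I_1(T)$, $I_2(T)$, $I_3(T)$. The estimates of $I_1$ and $I_2$ proceed just as in the stability proof but now exploiting $\sin^2/\lambda_k\le 1/\lambda_k$ instead of the coarser quadratic bound $(T-\tau)^2$, giving $I_1(T),I_2(T)\lesssim\lambda_k^{-1}$. For $I_3(T)$ the difficulty is that the kernel $(u-\tau)^{H-3/2}$ is not integrable in $u$, so the sine difference is essential for cancellation. My plan is to introduce a free scale $t_*>0$ and split both the outer $\tau$-integral (at $\tau=t_*$) and the inner $u$-integral (at $u=\tau+t_*$) into the four pieces $K_1,K_2,K_3,K_4$. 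On the near-diagonal pieces I use the Lipschitz bound $|\sin(\sqrt{\lambda_k}(T-u))-\sin(\sqrt{\lambda_k}(T-\tau))|\lesssim\sqrt{\lambda_k}|u-\tau|$, which combines with $(u-\tau)^{H-3/2}$ to give an integrable $(u-\tau)^{H-1/2}$; on the far-diagonal pieces I use the crude bound $|\sin|/\sqrt{\lambda_k}\le 1/\sqrt{\lambda_k}$, where $(u-\tau)^{H-3/2}$ is integrable on $[\tau+t_*,T]$. Direct computation produces $K_1\lesssim t_*^{2H+2}$, $K_3\lesssim t_*^{2H+1}$, $K_2\lesssim\lambda_k^{-1}t_*^{2H}$, and $K_4\lesssim\lambda_k^{-1}+\lambda_k^{-1}t_*^{2H-1}$. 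The balancing choice is $t_*=\lambda_k^{-1}$, for which the dominant terms are comparable and yield $I_3(T)\lesssim\lambda_k^{-2H}$. Since $2H<1$ in this regime, $\lambda_k^{-2H}$ absorbs the $\lambda_k^{-1}$ contributions from $I_1$ and $I_2$, giving the total rate $\lambda_k^{-2H}=\lambda_k^{-\min\{2H,1\}}$. Combining all three regimes and sending $k\to\infty$ makes both prefactors tend to zero, establishing the instability and completing the proof.
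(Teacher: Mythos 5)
Your proposal is correct and follows essentially the same route as the paper: the same elementary bound for the deterministic term, the same three-regime split via the isometry formulas \eqref{E12}, \eqref{E121}, \eqref{E012}, and for $H\in(0,\tfrac12)$ the same refined bounds on $I_1(T)$, $I_2(T)$ together with the identical $K_1$--$K_4$ decomposition of $I_3(T)$ at the scale $t_*=\lambda_k^{-1}$. Nothing further to add.
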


\section{Numerical experiments}

In this section, we present some numerical experiments for the one-dimensional problem where $D=[0,\pi]$. For some fixed integers $N_t$ and $N_x$, we define the time and space step-sizes $h_t=T/N_t$, $ h_x=\pi/N_x$ and nodes
\[
 t_n=nh_t,~n\in\{0,1,2,\ldots,N_t\},\quad x_i=ih_x,~i\in\{0,1,2,\ldots,N_x\}.
 \]

For the direct problem, the second order central difference is utilized to generate the synthetic data. Let $u^n_i$ be the numerical approximation to $u(x_i, t_n)$. Then we obtain the following explicit scheme:
\begin{align}\label{ed}
\frac{u_i^{n-1}-2u_i^n+u_i^{n+1}}{h_t^2}-\frac{u_{i-1}^n-2u_i^n+u_{i+1}^n}{h_x^2}=f(x_i)h(t_n)+g(x_i)\frac{B^H(t_{n})-B^H(t_{n-1})}{t_{n}-t_{n-1}}.
\end{align}
We use the ghost point method to handle the discretization of the initial condition $u_t(x,0)$. The virtual point $u_i^{-1}$ is introduced and the first order central difference is adopted, i.e., $\partial_t u|_{t=0}=(u_i^1-u_i^{-1})/(2h_t)$. From (\ref{ed}), when $n=0$, another equation about $u_i^{-1}$ can be obtained. Then we may use the two equations to get rid of $u_i^{-1}$ and obtain $u_i^1$.

For the inverse problem, the coefficients $f_k$ and $g_kg_l$ can be recovered by using (\ref{Iexp}) and (\ref{Icvar}), respectively. Once the coefficients are available, the source functions $f$ and $g^2$ can be expressed as
\[
f=\sum_{k=1}^\infty f_k\varphi_k,\quad g^2=\sum_{k,l=1}^\infty g_kg_l\varphi_k\varphi_l.
\]
Noting that computing $|g|$ and $g^2$ are equivalent, here we consider computing $g^2$. Since the inverse problem is ill-posed, we truncate above series by keeping the first $N$ terms as a regularization.

In the numerical experiments, we choose $N=9$, $N_t=2^{13}, N_x=100$, and $T=1$. The exact functions in \eqref{directp} are chosen as
\begin{equation*}
h(t)=1, \quad f(x)=\sin(3x), \quad g(x)=\exp(-(x-0.5\pi)^2).
\end{equation*}
We compute 1000 sample paths when simulating the covariance of the solution. In addition, the data is polluted by a uniformly distributed noise with level $\delta$.

\begin{figure}
  \centering
  \includegraphics[width=0.4\textwidth]{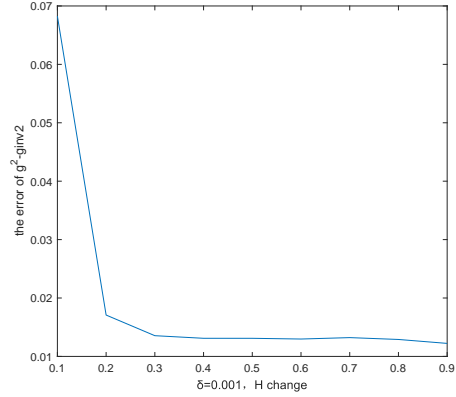}\\
  \caption{The relative errors of reconstruction for $g^2$ with different $H$ and a fixed $\delta=0.001$.}\label{fig2}
\end{figure}

\begin{table}\label{tabl1}
\centering
  \caption{The relative errors of reconstructions for $f$ and $g^2$ with different $\delta$ and a fixed $H=0.9$.}
\begin{tabular}{lllllll}
  \hline
  \hline
  $\delta$ & $0.001$ & $0.005$ & $0.01$ & $0.05$ & $0.1$ \\
  \hline
  $f$ & $0.0260$ & $0.0261$ & $0.0261$ & $0.0286$ & $0.0837$ \\
  \hline
  $g^2$& $0.0141$ & $0.0147$ & $0.0237$ & $0.0729$ & $0.0683$ \\
  \hline
  \hline
\end{tabular}
\end{table}

We report the numerical results for different sets of parameters $(H, \delta)$. Figure \ref{fig2} shows the results for the relative errors of $g^2$ with different $H$ and a fixed $\delta=0.001$.  For $H=0.9$, the results for the relative errors of $f$ and $g^2$ with different $\delta$ are given in Table 1. Figure \ref{fig3} plots the results of
$\{H=0.9, \delta=0.001\}, \{H=0.2, \delta=0.001\}$, and $\{H=0.9, \delta=0.01\}$. For $H=0.5$ and $\delta=0.1, 0.01, 0.001$, the exact and reconstructed solutions are given in Figure \ref{fig4}. Based on the numerical experiments, it can be observed that the reconstructions would be more accurate if the problem is more regular, i.e., $H$ is larger; if the noise level $\delta$ is smaller, the results would also be better, which exactly implies the ill-posedness of the inverse problem.

\begin{figure}
  \centering
  \includegraphics[width=0.4\textwidth]{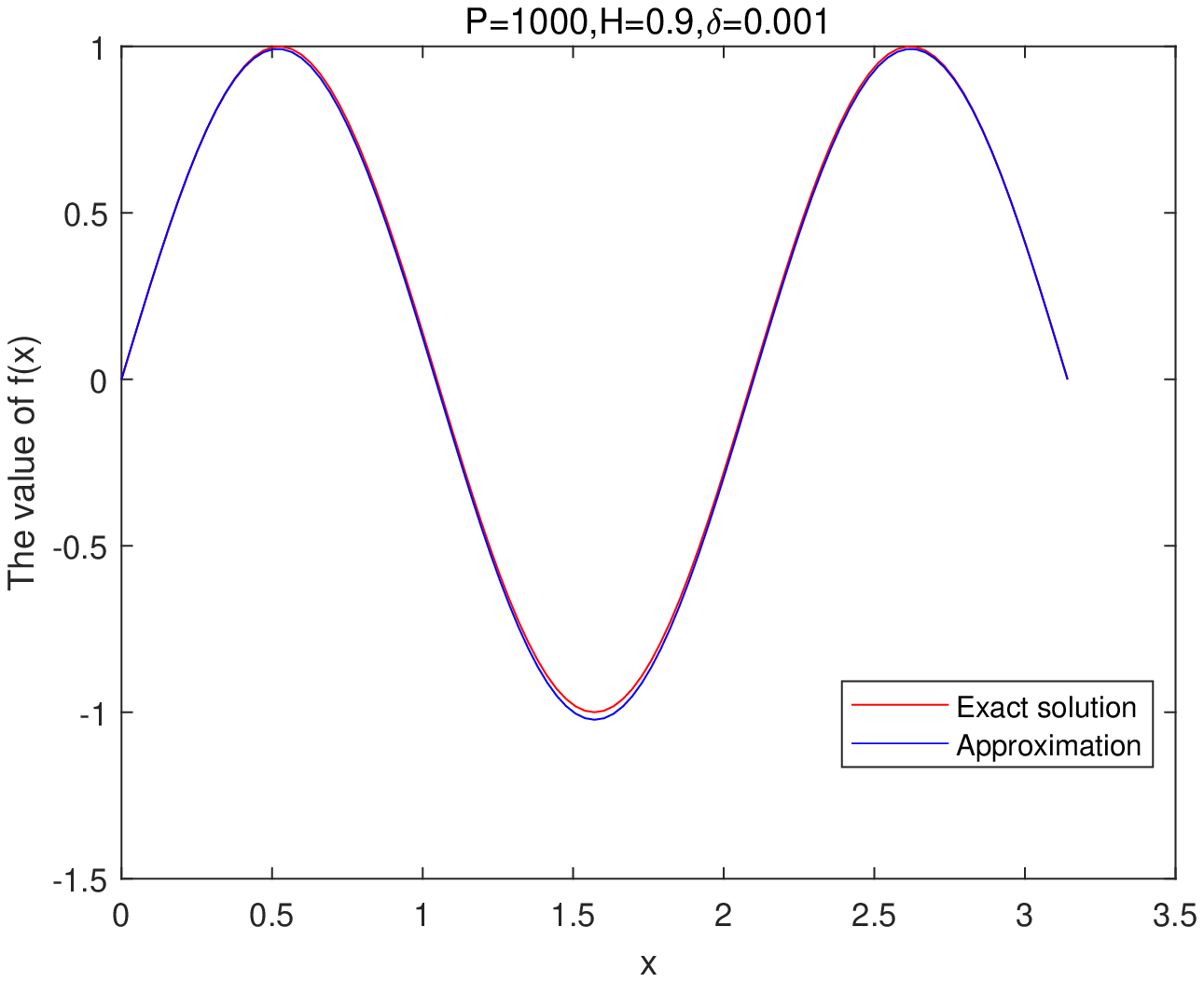}
  \includegraphics[width=0.4\textwidth]{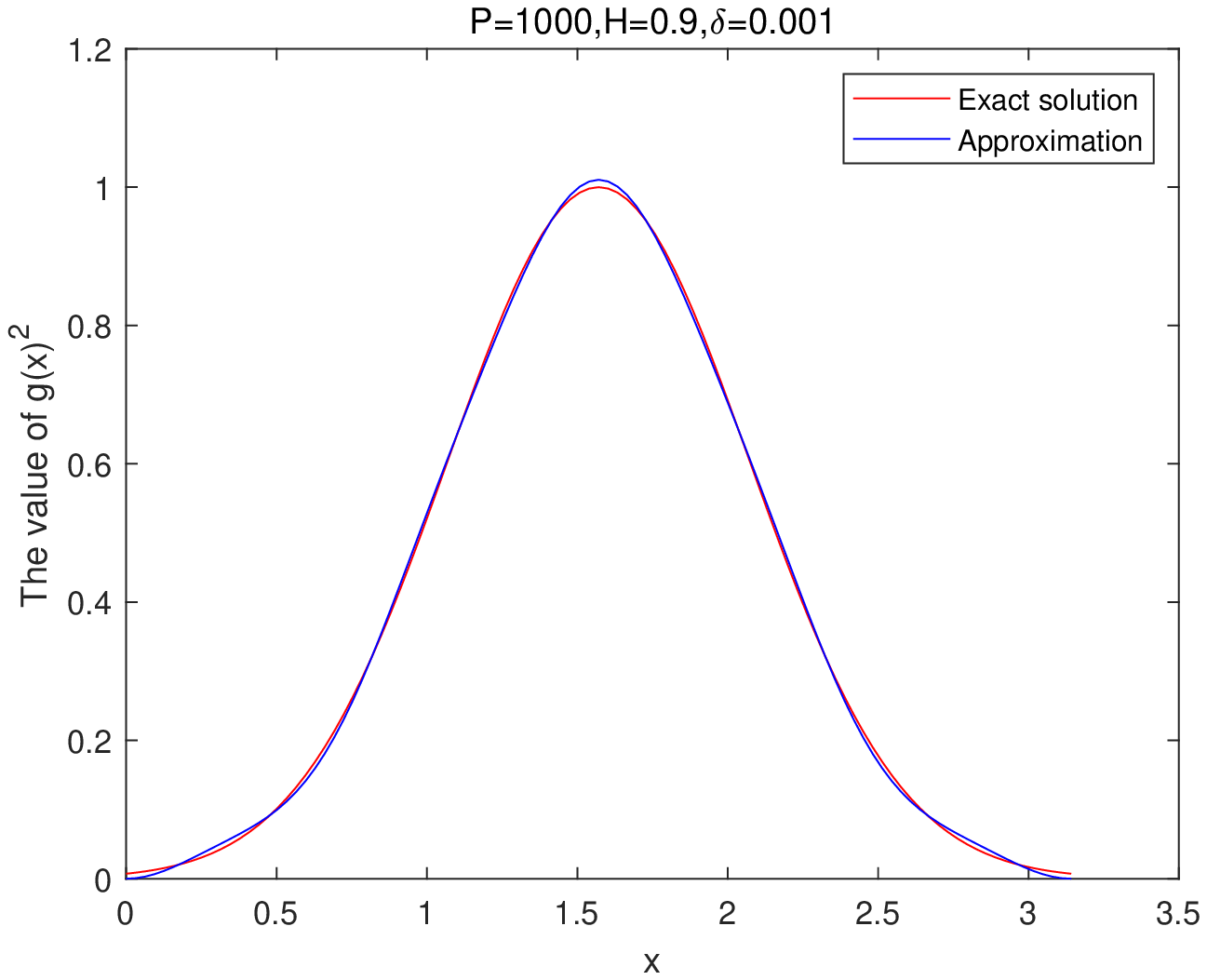}\\
  \includegraphics[width=0.4\textwidth]{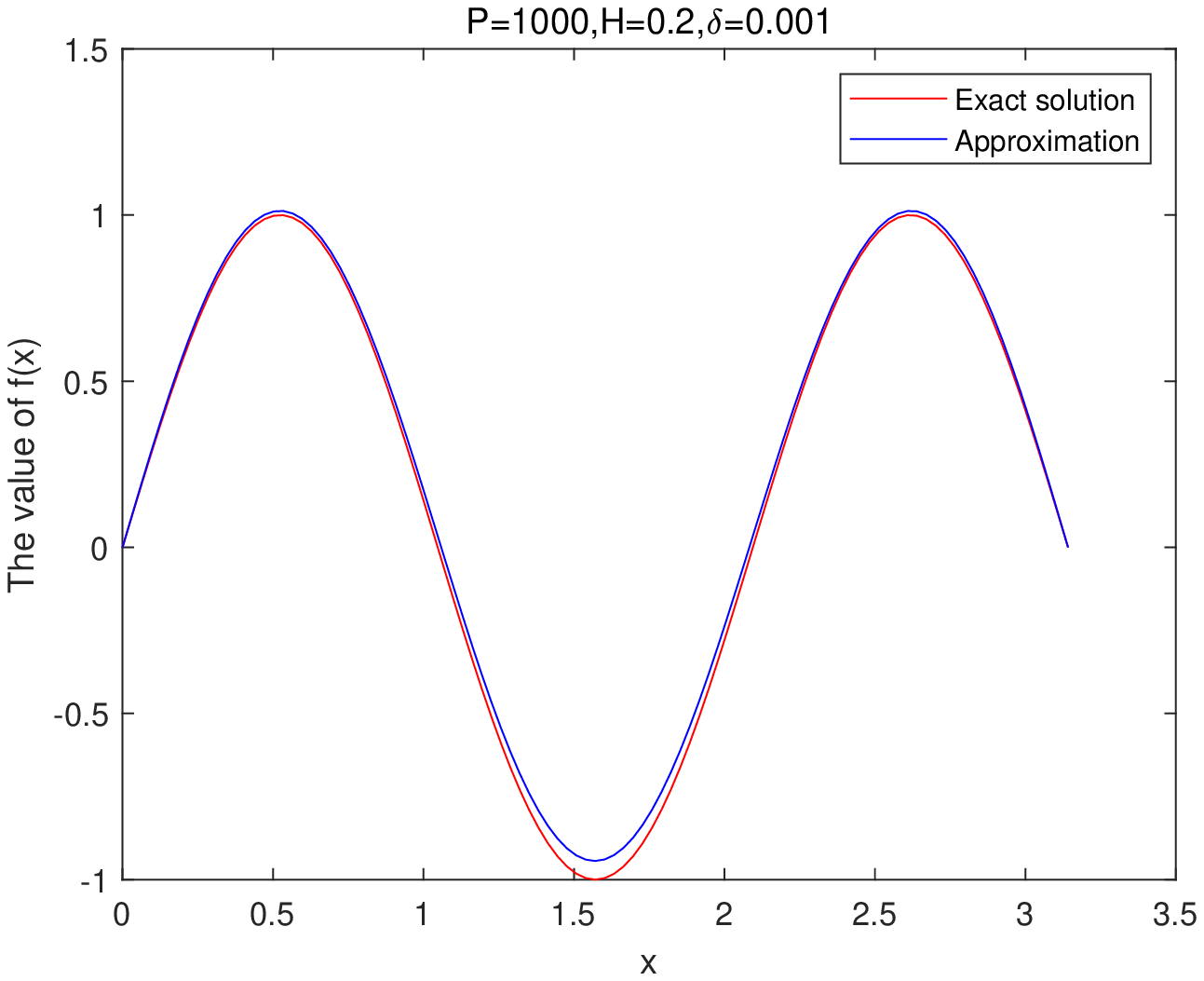}
  \includegraphics[width=0.4\textwidth]{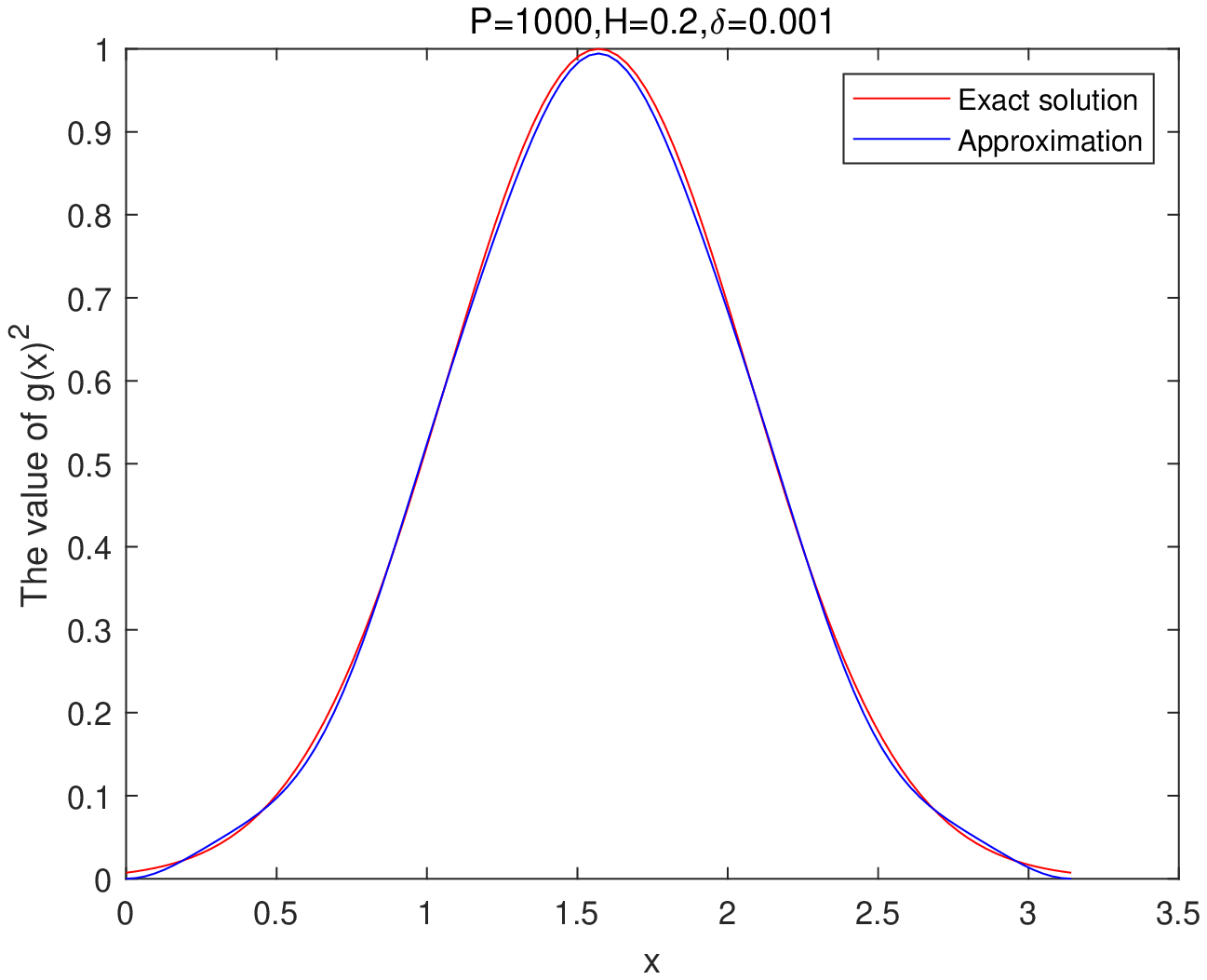}\\
  \includegraphics[width=0.4\textwidth]{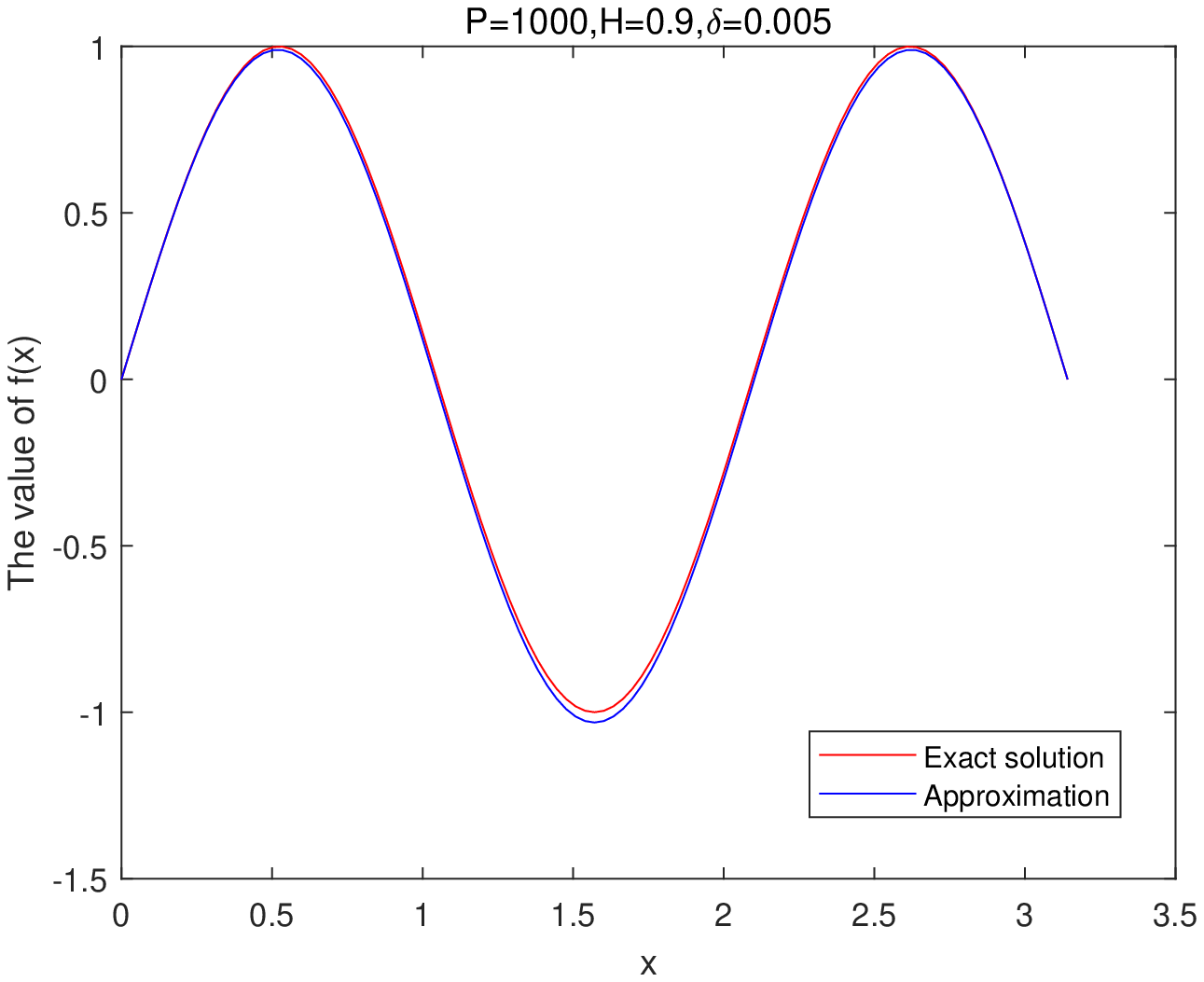}
  \includegraphics[width=0.4\textwidth]{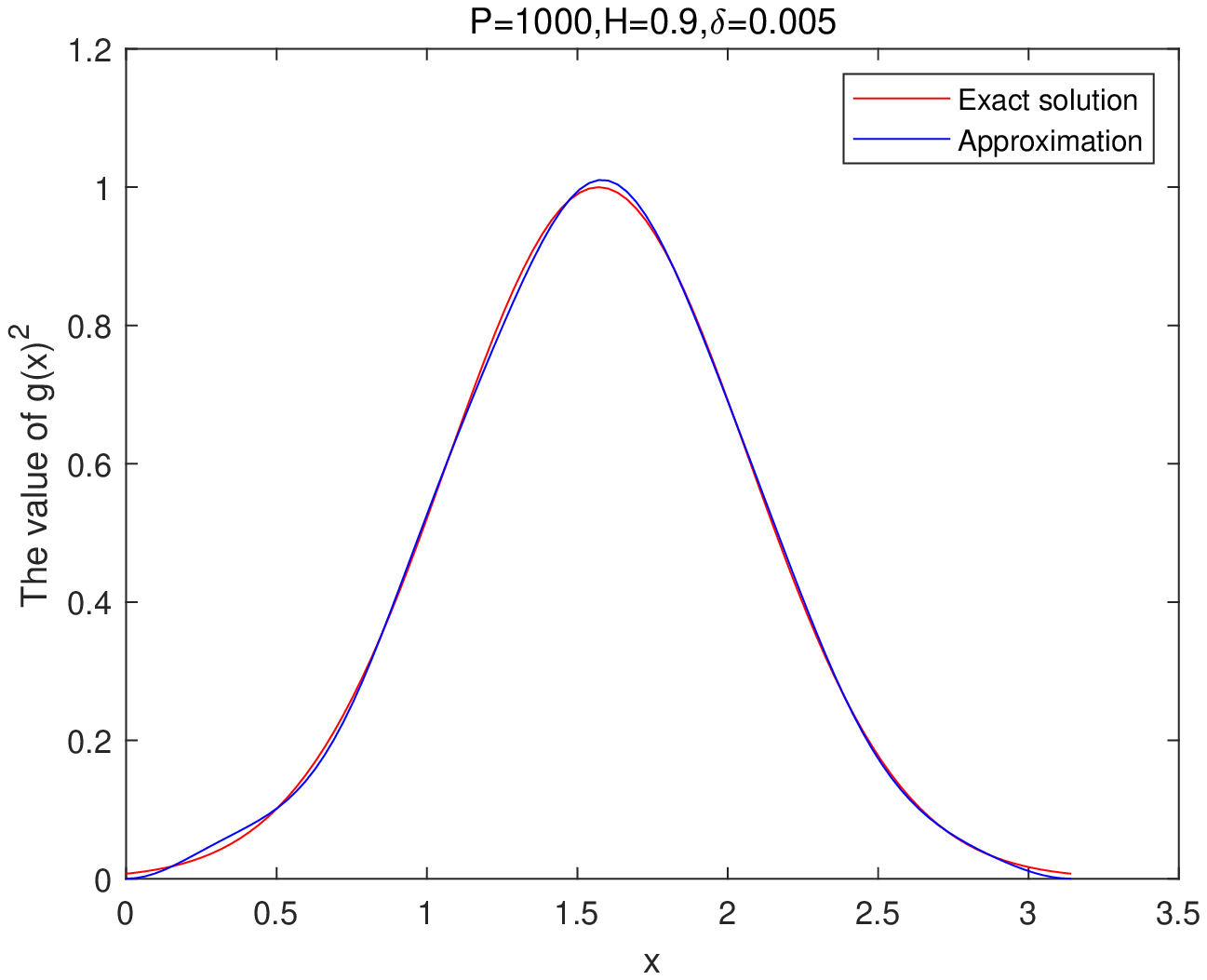}\\
   \caption{The exact solution is plotted against the reconstructed solutions with $\{H=0.9,\delta=0.001\}, \{H=0.2,\delta=0.001\}$ and $\{H=0.9,\delta=0.01\}$. (left) $f(x)$; (right) $g^2(x)$.}\label{fig3}
\end{figure}

\begin{figure}
  \centering
  \includegraphics[width=0.4\textwidth]{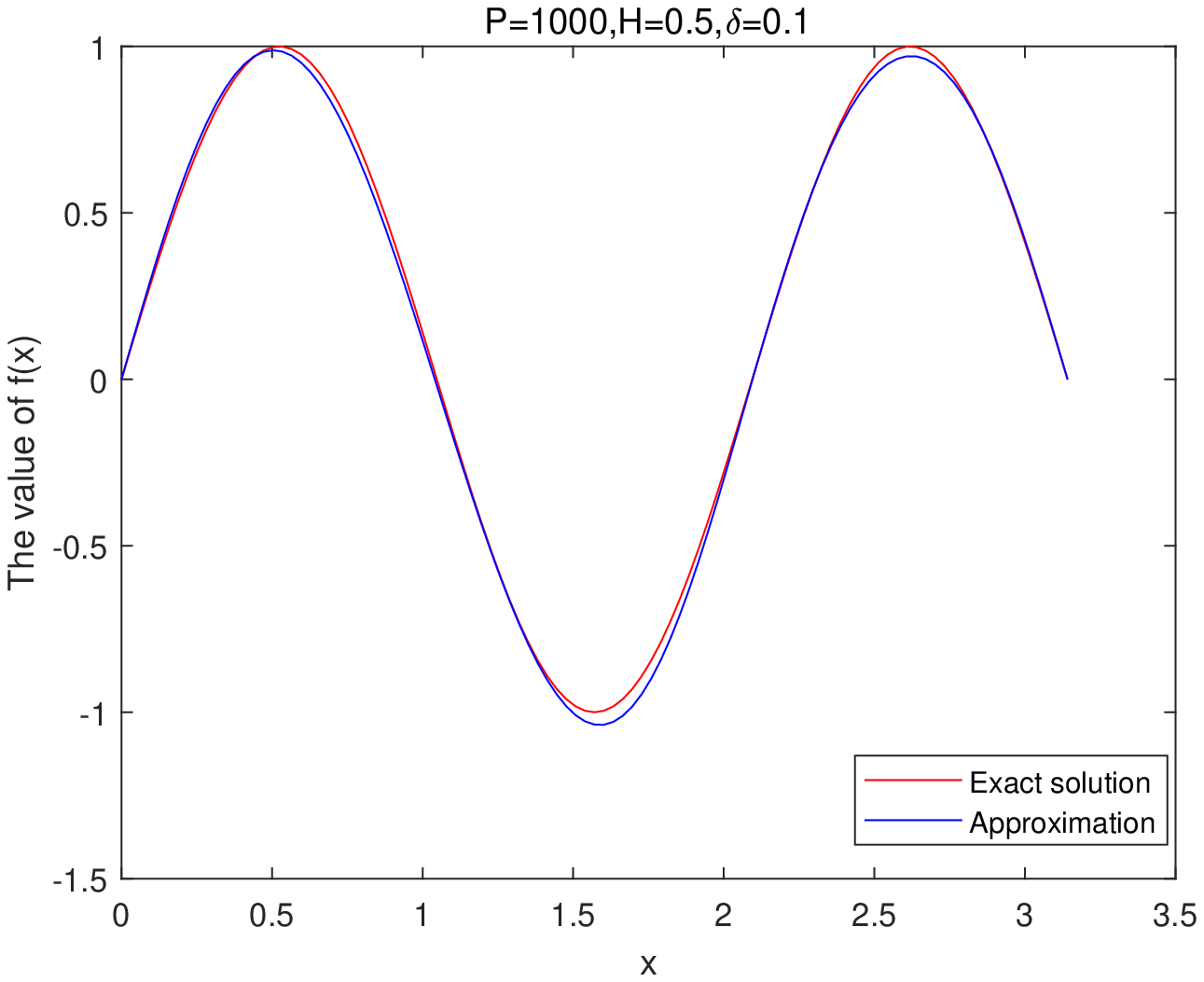}
  \includegraphics[width=0.4\textwidth]{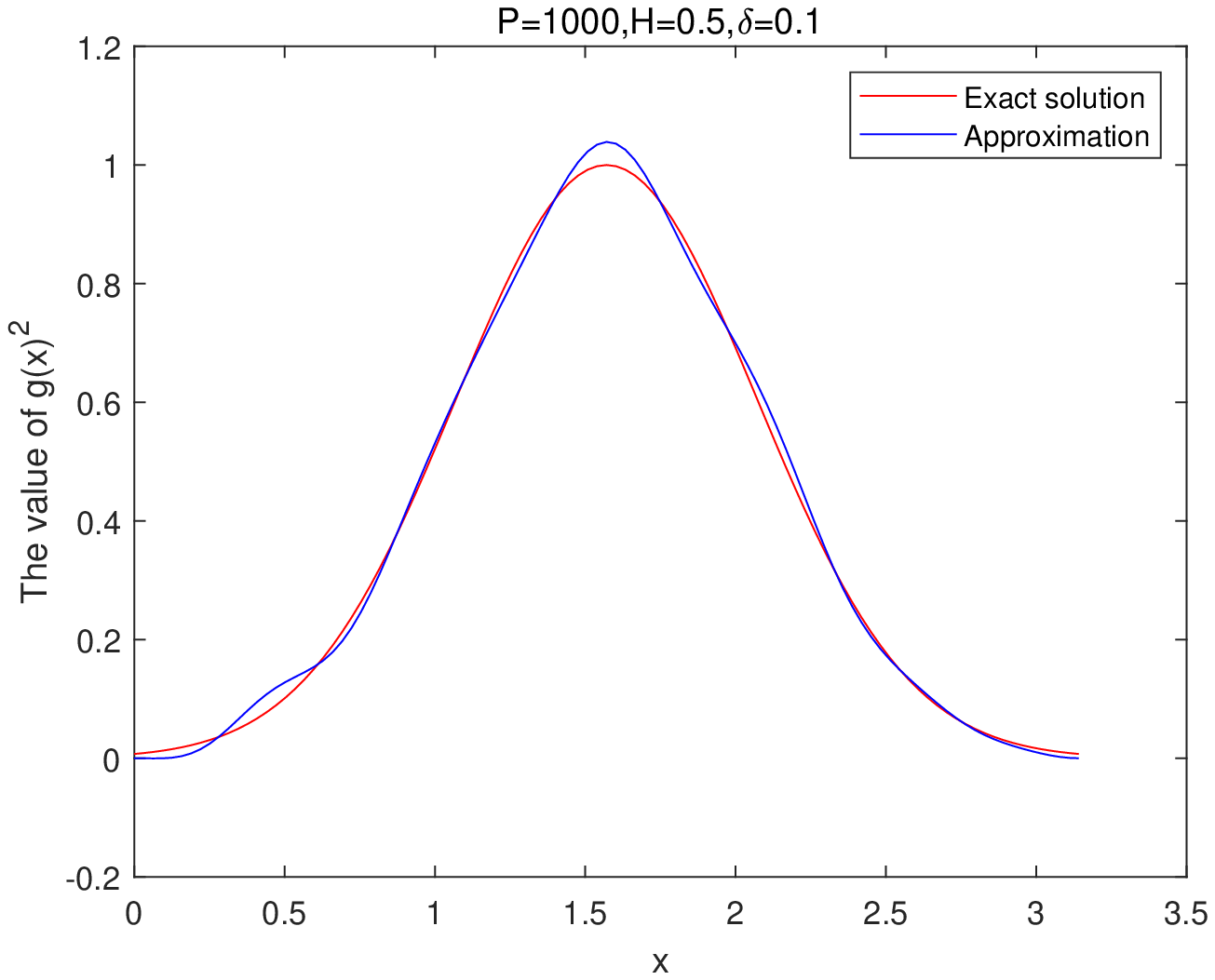}\\
  \includegraphics[width=0.4\textwidth]{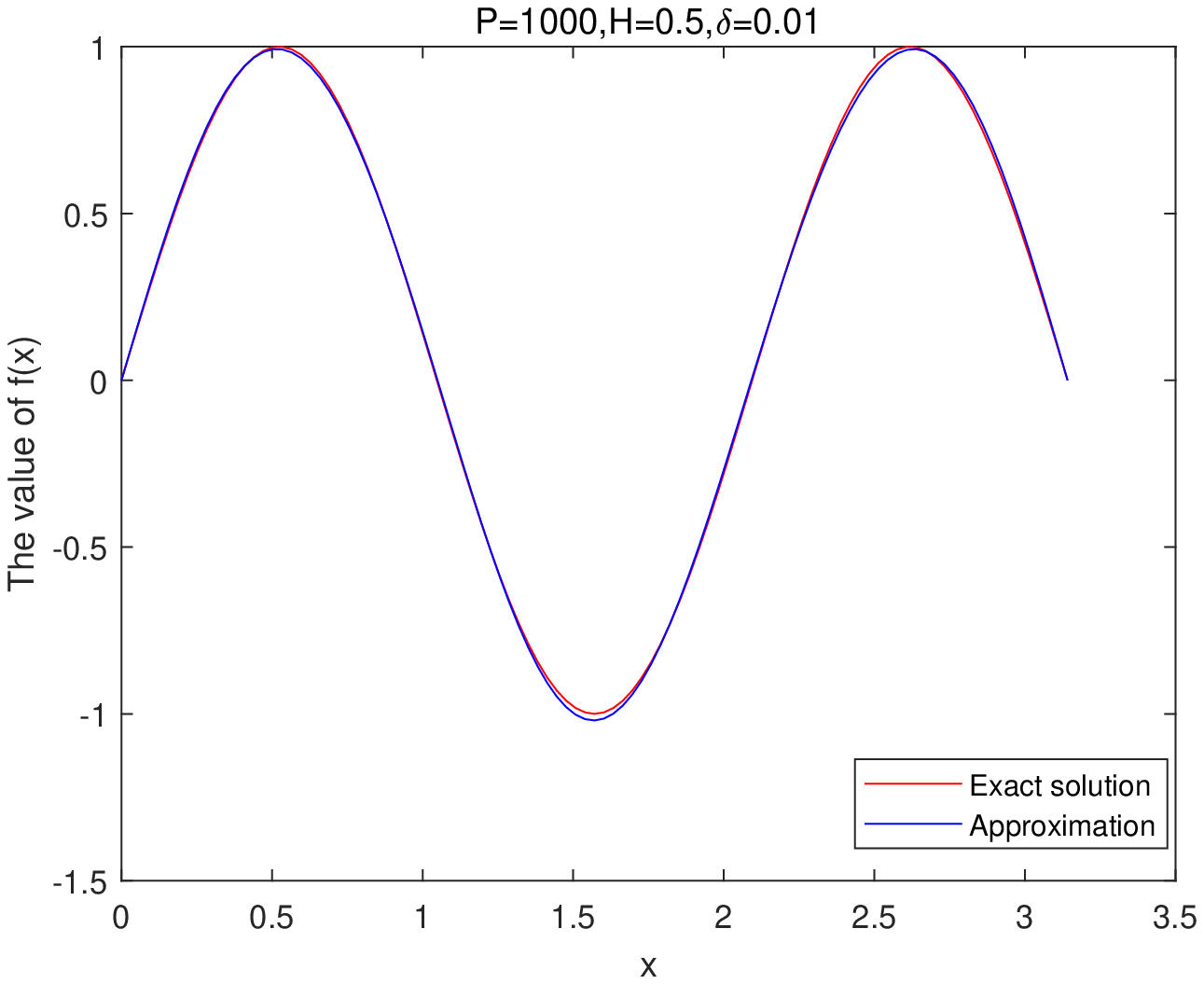}
  \includegraphics[width=0.4\textwidth]{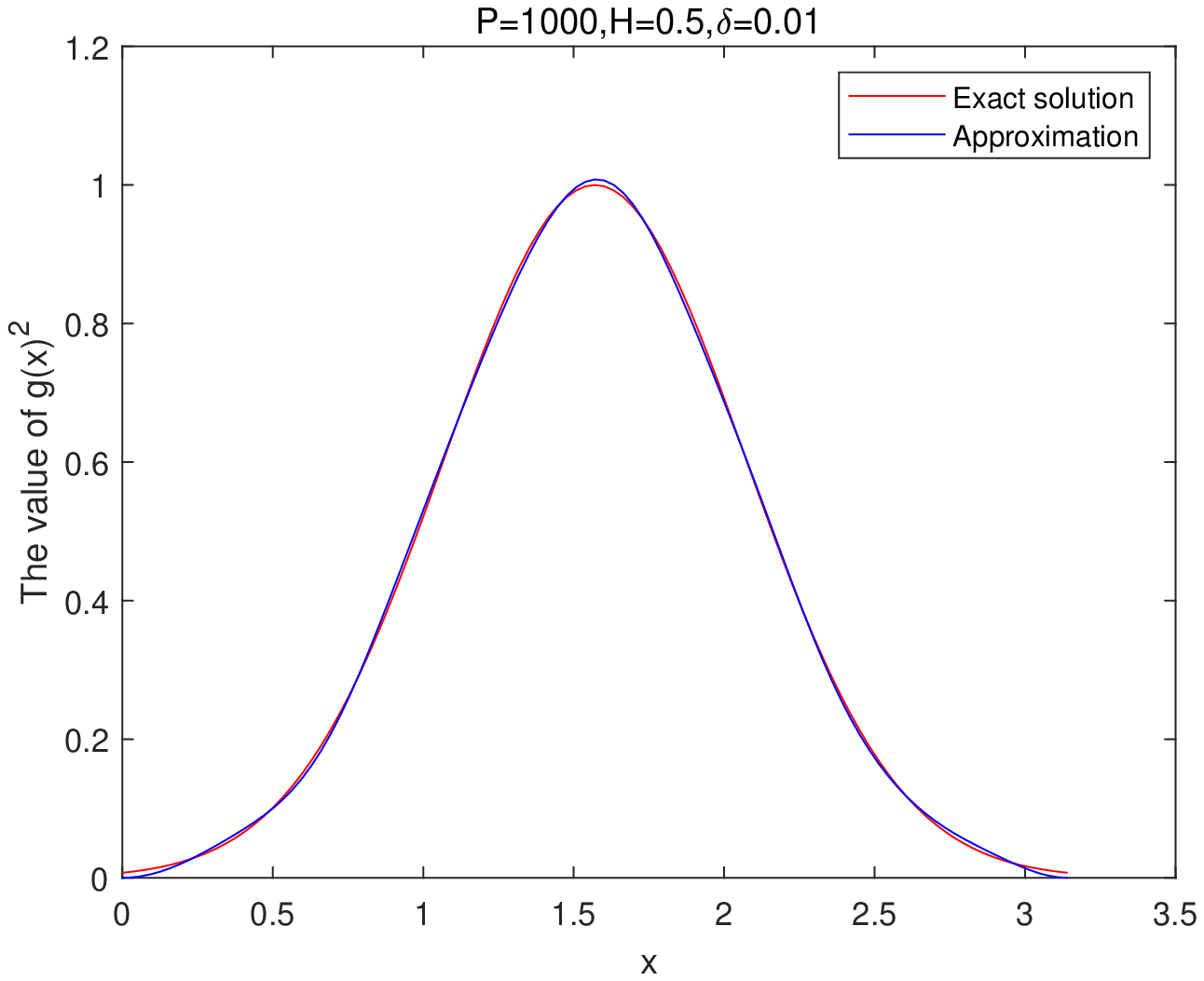}\\
  \includegraphics[width=0.4\textwidth]{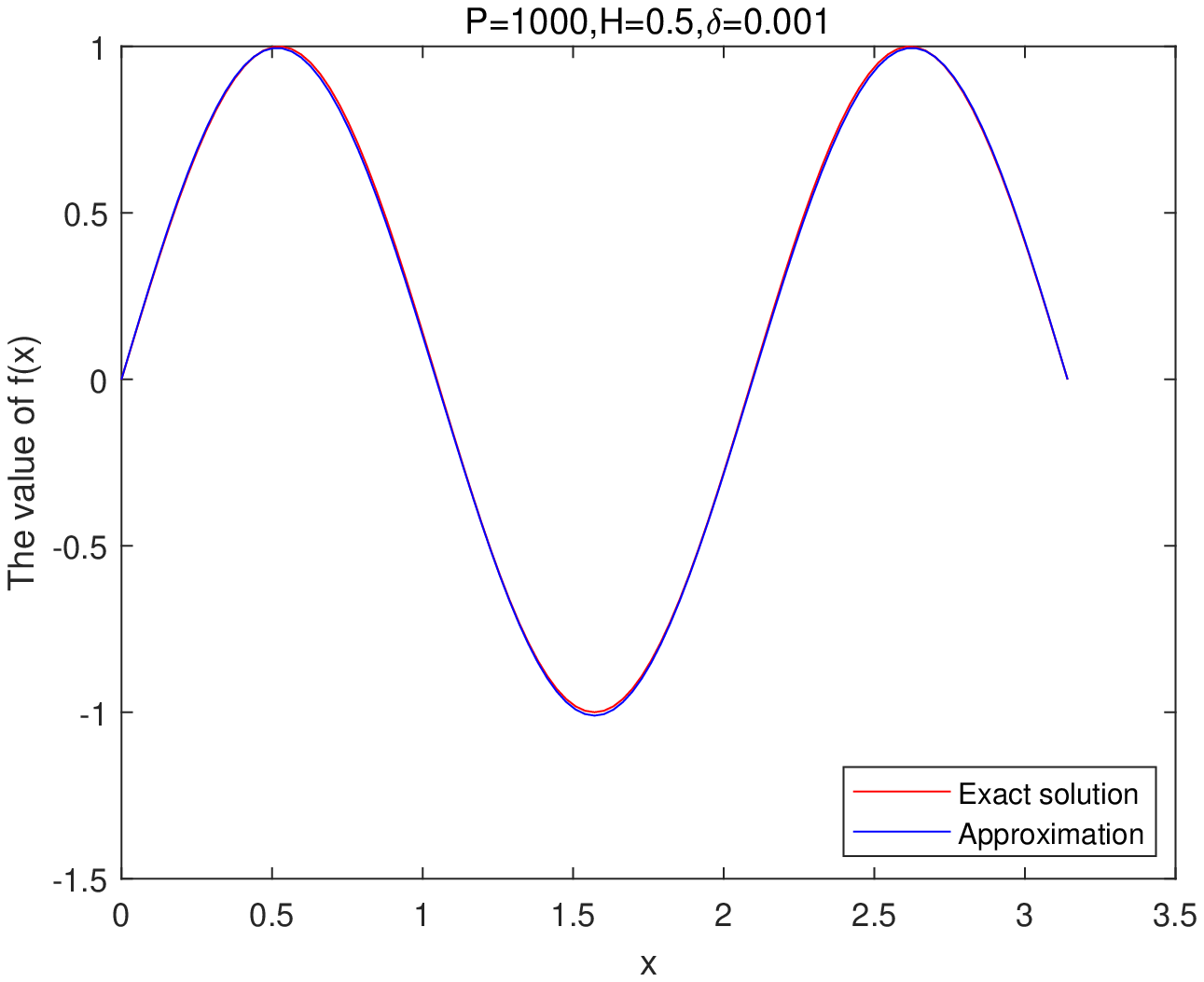}
  \includegraphics[width=0.4\textwidth]{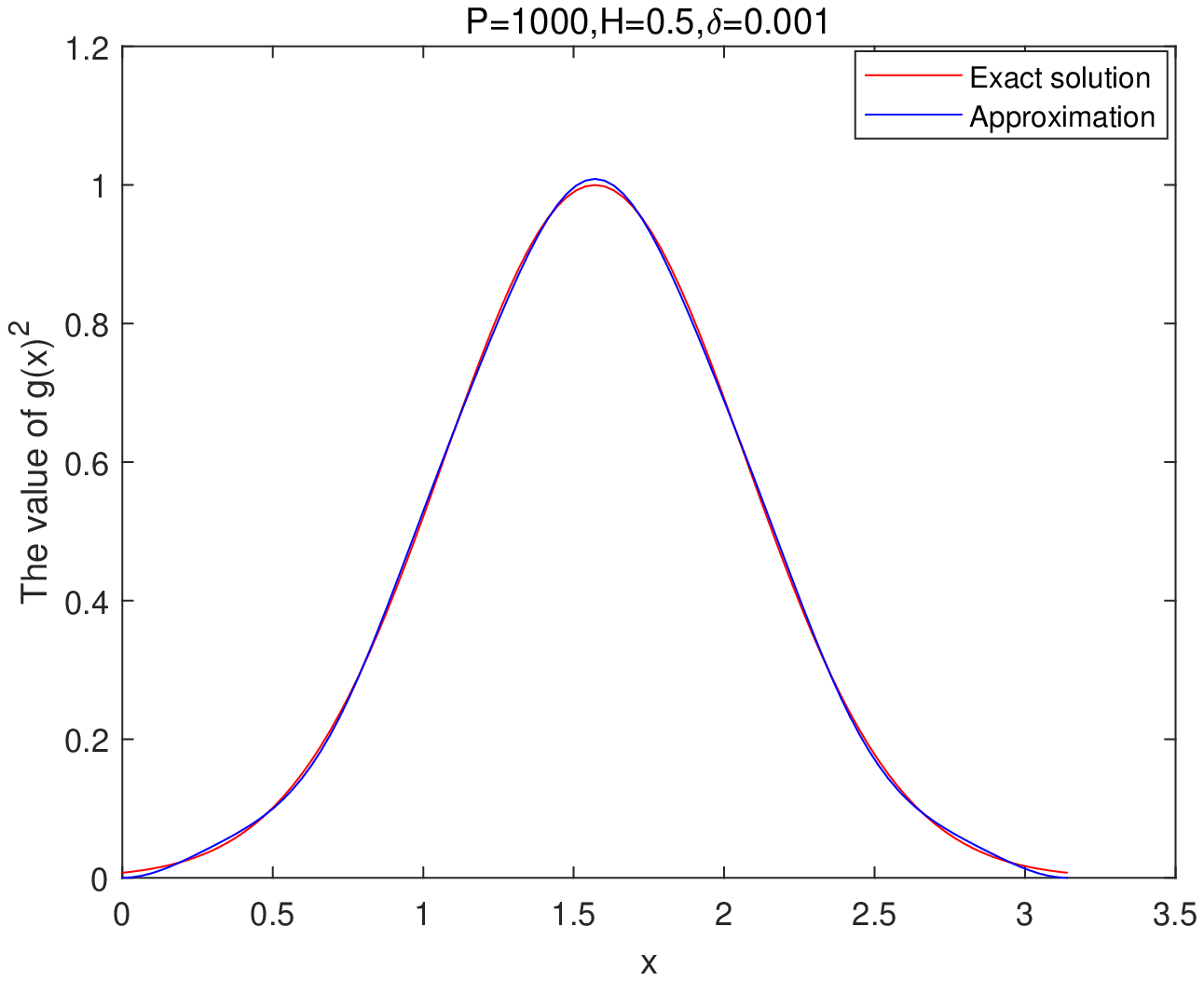}\\
   \caption{The exact solution is plotted against the reconstructed solutions with $H=0.5$ and $\delta=0.1, 0.01, 0.001$. (left) $f(x)$; (right) $g^2(x)$. }\label{fig4}
\end{figure}

\section{Conclusion}

In this paper, we have studied an inverse random source problem for the wave equation driven by the fBm.  We show that the direct problem is well-posed and the inverse problem is ill-posed in the sense that a small deviation of the data may lead to a huge error in the reconstruction. Moreover, for the one-dimensional case, the inverse problem is shown to has a unique solution for the white noise case, i.e., $H=\frac12$. It is unclear if the uniqueness can still hold for the general Hurst index due to the highly oscillatory kernel function for large $k$. We will investigate the uniqueness issue for the ISP of the higher dimensional stochastic wave equation driven by the fBm in the future.

\end{document}